\tikzset{node distance=3cm, auto}
\newtheorem{theorem}{Theorem}[section]
\newtheorem{proposition}[theorem]{Proposition}
\newtheorem{definition}[theorem]{Definition}
\newtheorem{corollary}[theorem]{Corollary}
\def\AS{\mathcal{AS}}
\def\At{\mathrm{At}}
\def\Gv{\G_{v}}
\def\Du{\mathcal{D}}
\def\G{\mathcal{G}}
\def\H{\mathcal{H}}
\def\Hv{\mathcal{H}_v}
\def\I{\mathcal{I}}
\def\J{\mathcal{J}}
\def\L{\mathcal{L}}
\def\F{\mathcal{F}}
\def\K{\mathcal{K}}
\def\W{\mathcal{W}}
\def\Nu{\mathcal{N}}
\def\R{\mathcal{R}}
\def\S{\mathcal{S}}
\def\C{\mathbb{C}}
\def\N{\mathbb{N}}
\def\abco{\mathrm{aco}}
\def\lin{\mathrm{lin}}
\def\d{\mathrm{dual}}
\begin{document}


\title[The injective hull of ideals of weighted holomorphic mappings]{The injective hull of ideals of weighted holomorphic mappings}

\author[A. Jim{\'e}nez-Vargas]{A. Jim{\'e}nez-Vargas}
\address[A. Jim{\'e}nez-Vargas]{Departamento de Matem\'aticas, Universidad de Almer\'ia, Ctra. de Sacramento s/n, 04120 La Ca\~{n}ada de San Urbano, Almer\'ia, Spain.}
\email{ajimenez@ual.es}

\author[M.I. Ram\'{i}rez]{Mar{\'\i}a Isabel Ram{\'\i}rez}
\address[M.I. Ram\'{i}rez]{Departamento de Matem\'aticas, Universidad de Almer\'ia, Ctra. de Sacramento s/n, 04120 La Ca\~{n}ada de San Urbano, Almer\'ia, Spain.}
\email{mramirez@ual.es}

\date{\today}

\subjclass[2020]{47A63,47L20,46E50,46T25}
\keywords{Weighted holomorphic mapping, injective hull, domination theorem, operator ideal, Ehrling inequality.}


\begin{abstract}
We study the injectivity of normed ideals of weighted holomorphic mappings. To be more precise, the concept of injective hull of normed weighted holomorphic ideals is introduced and characterized in terms of a domination property. The injective hulls of those ideals -- generated by the procedures of composition and dual -- are described and these descriptions are applied to some examples of such ideals. A characterization of the closed injective hull of an operator ideal in terms of an Ehrling-type inequality -- due to Jarchow and Pelczy\'nski-- is established for weighted holomorphic mappings.
\end{abstract}
\maketitle


\section*{Introduction}

Influenced by the concept of operator ideals (see the book \cite{Pie-80} by Pietsch), the notion of ideals of weighted holomorphic mappings was introduced in \cite{CabJimKet-24}, although also the ideals of bounded holomorphic mappings were analysed in  \cite{CabJimRui-23}. In \cite{CabJimKet-24}, the composition procedure to generate weighted holomorphic ideals was studied and some examples of such ideals were presented. 

Our aim in this paper is to address the injective procedure in the context of weighted holomorphic mappings. In the linear setting, the concept of injective hull of an operator ideal was dealt by Pietsch \cite{Pie-80}, although some ingredients already appeared in the paper \cite{Ste-70} by Stephani. 

Given an open subset $U$ of a complex Banach space $E$, a weight $v$ on $U$ is a (strictly) positive continuous function. For any complex Banach space $F$, let $\H(U,F)$ be the space of all holomorphic mappings from $U$ into $F$. The space of weighted holomorphic mappings, $\H_v^\infty(U,F)$, is the Banach space of all mappings $f\in\H(U,F)$ so that   
$$
\left\|f\right\|_v:=\sup\left\{v(x)\left\|f(x)\right\|\colon x\in U\right\}<\infty ,
$$ 
under the weighted supremum norm $\left\|\cdot\right\|_v$. We will write $\H^\infty_v(U)$ instead of $\H^\infty_v(U,\C)$. 

About the theory of weighted holomorphic mappings, the interested reader can consult the papers \cite{BieSum-93} by Bierstedt and Summers, \cite{BonDomLin-99,BonDomLin-01} by Bonet, Domanski and Lindstr\"om, and \cite{GupBaw-16} by Gupta and Baweja. See also the recent survey \cite{Bon-22} by Bonet on these function spaces, and the references therein.

By definition, the injective hull of a normed weighted holomorphic ideal $[\I^{\Hv^\infty},\|\cdot\|_{\I^{\Hv^\infty}}]$ is the smallest injective normed weighted holomorphic ideal containing $\I^{\Hv^\infty}$. In Subsection \ref{subsection 1}, we will establish the existence of this injective hull, and -- as a immediate consequence -- the injectivity of a normed weighted holomorphic ideal is characterized by the coincidence with its injective hull. 

In Subsection \ref{subsection 2}, a characterization of the injective hull of a normed weighted holomorphic ideal is stated by means of a domination property, and it is applied to describe the injectivity of a normed weighted holomorphic ideal in a form similar to those obtained in the linear and polynomial versions \cite{BotCamSan-17,BotTor-20}. 

Using the linearization of weighted holomorphic mappings, we describe in Subsection \ref{subsection 3} the injective hull of composition ideals of weighted holomorphic mappings and apply this description to establish the injectivity of the normed weighted holomorphic ideals generated by composition with some distinguished classes of bounded linear operators such as finite-rank, compact, weakly compact, separable, Rosenthal and Asplund operators. 

In Subsection \ref{subsection 4}, the concept of dual weighted holomorphic ideal of an operator ideal $\I$ is introduced and showed that it coincides with the weighted holomorphic ideal generated by composition with the dual operator ideal $\I^\d$. Moreover, we study the injectivity of such dual weighted holomorphic ideals as well as the dual weighted holomorphic ideals of the ideals of $p$-compact and Cohen strongly $p$-summing operators for any $p\in (1,\infty)$.

Subsection \ref{subsection 5} presents a weighted holomorphic variant of a characterization --due to Jarchow and Pelczy\'nski \cite{Jar-81}-- of the closed injective hull of an operator ideal by means of an Ehrling-type inequality \cite{Ehr-54}.

It should be noted that different authors have studied these questions for ideals of functions in both linear settings (for classical $p$-compact operators \cite{Fou-18}, $(p,q)$-compact operators \cite{Kim-19}, weakly $p$-nuclear operators \cite{Kim-19b} and multilinear mappings \cite{ManRuedSan-20}) as well as in non-linear contexts (for holomorphic mappings \cite{GonGut-99}, polynomials \cite{BotTor-20} and Lipschitz operators \cite{AchDahTur-20}), among others.


\section{Results}\label{section 2}

We will present the results of this paper in various subsections. From now on, unless otherwise stated, $E$ will denote a complex Banach space, $U$ an open subset of $E$, $v$ a weight on $U$, and $F$ a complex Banach space.

Our notation is standard. $\L(E,F)$ denotes the Banach space of all bounded linear operators from $E$ into $F$, equipped with the operator canonical norm. $E^{*}$ and $B_E$ represent the dual space and the closed unit ball of $E$, respectively. Given a set $A\subseteq E$, $\overline{\lin}(A)$ and $\overline{\abco}(A)$ stand for the norm closed linear hull and the norm closed absolutely convex hull of $A$ in $E$. 


\subsection{The injective hull of ideals of weighted holomorphic mappings}\label{subsection 1}

In the light of Definition 2.4 in \cite{CabJimKet-24}, a normed (Banach) ideal of weighted holomorphic mappings -- or, in short,  a normed (Banach) weighted holomorphic ideal -- is an assignment $[\I^{\Hv^\infty},\left\|\cdot\right\|_{\I^{\Hv^\infty}}]$ which associates every pair $(U,F)$, -- where $E$ is a complex Banach space, $U$ is an open subset of $E$ and $F$ is a complex Banach space-- to both a set $\I^{\Hv^\infty}(U,F)\subseteq\Hv^\infty(U,F)$ and a function $\|\cdot\|_{\I^{\Hv^\infty}}\colon\I^{\Hv^\infty}(U,F)\to\mathbb{R}$ satisfying
\begin{itemize}
\item[(P1)] $(\I^{\Hv^\infty}(U,F),\|\cdot\|_{\I^{\Hv^\infty}})$ is a normed (Banach) space with $\|f\|_v\leq \|f\|_{\I^{\Hv^\infty}}$ for $f\in\I^{\Hv^\infty}(U,F)$, 
\item[(P2)] Given $h\in\Hv^\infty(U)$ and $y\in F$, the map $h\cdot y\colon x\in U\mapsto h(x)y\in F$ is in $\I^{\Hv^\infty}(U,F)$ with $\|h\cdot y\|_{\I^{\Hv^\infty}}=\|h\|_v||y||$,
\item[(P3)] The ideal property: if $V$ is an open subset of $E$ such that $V\subseteq U$, $h\in\H(V,U)$ with $c_v(h):=\sup_{x\in V}(v(x)/v(h(x)))<\infty$, $f\in\I^{\Hv^\infty}(U,F)$ and $T\in\L(F,G)$ where $G$ is a complex Banach space, then $T\circ f\circ h\in\I^{\Hv^\infty}(V,G)$ with $\|T\circ f\circ h\|_{\I^{\Hv^\infty}}\leq\left\|T\right\|\|f\|_{\I^{\Hv^\infty}}c_v(h)$.
\end{itemize}

According to Sections 4.6 and 8.4 in \cite{Pie-80}, an operator ideal $\I$ is said to be injective if for each Banach space $G$ and each isometric linear embedding $\iota\colon F\to G$, an operator $T\in\L(E,F)$ belongs to $\I(E,F)$ whenever $\iota\circ T\in\I(E,G)$. A normed operator ideal $[\I,\left\|\cdot\right\|_{\I}]$ is called injective if, in addition, $\left\|T\right\|_{\I}=\left\|\iota\circ T\right\|_{\I}$. 

The adaptation of this notion to the weighted holomorphic setting could be as follows.

A normed weighted holomorphic ideal $[\I^{\Hv^\infty},\left\|\cdot\right\|_{\I^{\Hv^\infty}}]$ is called:
\begin{itemize}
\item[(I)] injective if for any map $f\in\Hv^{\infty}(U,F)$, any complex Banach space $G$ and any into linear isometry $\iota\colon F\to G$, one has $f\in\I^{\Hv^{\infty}}(U,F)$ with $\left\|f\right\|_{\Hv^{\infty}}=\left\|\iota\circ f\right\|_{\Hv^\infty}$ whenever $\iota\circ f\in\I^{\Hv^\infty}(U,G)$. 
\end{itemize}

Given normed weighted holomorphic ideals $[\I^{\Hv^\infty},\left\|\cdot\right\|_{\I^{\Hv^\infty}}]$ and $[\J^{\Hv^\infty},\left\|\cdot\right\|_{\J^{\Hv^\infty}}]$, the relation 
$$
[\I^{\Hv^\infty},\left\|\cdot\right\|_{\I^{\Hv^\infty}}]\leq [\J^{\Hv^\infty},\left\|\cdot\right\|_{\J^{\Hv^\infty}}]
$$
means that for any complex Banach space $E$, any open set $U\subseteq E$ and any complex Banach space $F$, one has $\I^{\Hv^\infty}(U,F)\subseteq\J^{\Hv^\infty}(U,F)$ with $\left\|f\right\|_{\J^{\Hv^\infty}}\leq \left\|f\right\|_{\I^{\Hv^\infty}}$ for $f\in\I^{\Hv^\infty}(U,F)$. 

Motivated by the linear and polynomial versions (see \cite[Proposition 19.2.2]{Jar-81} and \cite[Proposition 2.3]{BotTor-20}), we next address the existence of the injective hull of a normed weighted holomorphic ideal.

Recall that the unique smallest injective operator ideal $\I^{inj}$ that contains an operator ideal $\I$ is called the injective hull of $\I$ and described as the set
$$
\I^{inj}(E,F)=\left\{T\in\L(E,F)\colon \iota_F\circ T\in\I(E,\ell_\infty(B_{Y^*})\right\},
$$
where $\iota_F\colon F\to\ell_\infty(B_{F^*})$ is the canonical isometric linear embedding defined by  
$$
\left\langle \iota_F(y),y^*\right\rangle=y^*(y)\qquad (y^*\in B_{F^*},\; y\in F).
$$
Taking $\left\|T\right\|_{\I^{inj}}=\left\|\iota_F\circ T\right\|_\I$ for $T\in\I^{inj}(E,F)$, $[\I^{inj},\left\|\cdot\right\|_{\I^{inj}}]$ is a normed (Banach) operator ideal whenever $[\I,\left\|\cdot\right\|_{\I}]$ is so. 

We now present the closely related concept in the setting of weighted holomorphic maps.

\begin{proposition}\label{p1}
Let $[\I^{\Hv^\infty},\|\cdot\|_{\I^{\Hv^\infty}}]$ be a normed (Banach) weighted holomorphic ideal. Then there exists a unique smallest normed (Banach) injective weighted holomorphic ideal $[(\I^{\Hv^\infty})^{inj},\left\|\cdot\right\|_{(\I^{\Hv^\infty})^{inj}}]$ such that
$$
[\I^{\Hv^\infty},\left\|\cdot\right\|_{\I^{\Hv^\infty}}]\leq [(\I^{\Hv^\infty})^{inj},\left\|\cdot\right\|_{(\I^{\Hv^\infty})^{inj}}].
$$
In fact, for any complex Banach space $F$, we have 
$$
(\I^{\Hv^\infty})^{inj}(U,F)=\left\{f\in\Hv^\infty(U,F)\colon \iota_F\circ f\in\I^{\Hv^\infty}(U,\ell_\infty(B_{F^*})\right\}
$$
where $\iota_F\colon F\to\ell_\infty(B_{F^*})$ is the canonical isometric linear embedding, and
$$
\left\|f\right\|_{(\I^{\Hv^\infty})^{inj}}=\left\|\iota_F\circ f\right\|_{\I^{\Hv^\infty}}\qquad (f\in(\I^{\Hv^\infty})^{inj}(U,F)).
$$
The normed (Banach) ideal $[(\I^{\Hv^\infty})^{inj},\left\|\cdot\right\|_{(\I^{\Hv^\infty})^{inj}}]$) is called the injective hull of $[\I^{\Hv^\infty},\|\cdot\|_{\I^{\Hv^\infty}}]$.
\end{proposition}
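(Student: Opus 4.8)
The plan is to define $[(\I^{\Hv^\infty})^{inj},\left\|\cdot\right\|_{(\I^{\Hv^\infty})^{inj}}]$ by the two displayed formulas and then check, one by one, that it is a normed (Banach) weighted holomorphic ideal, that it is injective, that it lies above $[\I^{\Hv^\infty},\left\|\cdot\right\|_{\I^{\Hv^\infty}}]$, and that it lies below every injective normed weighted holomorphic ideal above $\I^{\Hv^\infty}$; uniqueness of the smallest is then automatic. This is the transcription to the weighted holomorphic setting of Pietsch's argument for operator ideals, and the one non-formal ingredient is the metric extension property ($1$-injectivity) of the spaces $\ell_\infty(\Gamma)$: every bounded linear operator from a linear subspace of a Banach space $X$ into some $\ell_\infty(\Gamma)$ extends to all of $X$ without increasing its norm (apply the complex Hahn--Banach theorem in each coordinate).

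First I would verify (P1)--(P3). Because $\iota_F$ is an isometry, $\left\|\iota_F\circ f\right\|_v=\left\|f\right\|_v$ for every $f$, so (P1) for $\I^{\Hv^\infty}$ gives $\|f\|_v\leq\|\iota_F\circ f\|_{\I^{\Hv^\infty}}=\|f\|_{(\I^{\Hv^\infty})^{inj}}$, and that $\|\cdot\|_{(\I^{\Hv^\infty})^{inj}}$ is a norm follows from the norm axioms for $\|\cdot\|_{\I^{\Hv^\infty}}$ together with the linearity of $f\mapsto\iota_F\circ f$. For the Banach case, a $\|\cdot\|_{(\I^{\Hv^\infty})^{inj}}$-Cauchy sequence $(f_n)$ is $\|\cdot\|_v$-Cauchy, hence $\|\cdot\|_v$-converges to some $f\in\Hv^\infty(U,F)$, while $(\iota_F\circ f_n)$ converges in $[\I^{\Hv^\infty},\|\cdot\|_{\I^{\Hv^\infty}}]$ to some $g\in\I^{\Hv^\infty}(U,\ell_\infty(B_{F^*}))$; comparing the two limits pointwise on $U$ gives $g=\iota_F\circ f$, so $f\in(\I^{\Hv^\infty})^{inj}(U,F)$ and $f_n\to f$ in the new norm. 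Axiom (P2) is immediate from $\iota_F\circ(h\cdot y)=h\cdot\iota_F(y)$ and $\|\iota_F(y)\|=\|y\|$. For (P3), given $V\subseteq U$, $h\in\H(V,U)$ with $c_v(h)<\infty$, $f\in(\I^{\Hv^\infty})^{inj}(U,F)$ and $T\in\L(F,G)$, the assignment $\iota_F(y)\mapsto\iota_G(Ty)$ on the subspace $\iota_F(F)\subseteq\ell_\infty(B_{F^*})$ has norm $\leq\|T\|$, hence extends to $\widehat{S}\in\L(\ell_\infty(B_{F^*}),\ell_\infty(B_{G^*}))$ with $\|\widehat{S}\|\leq\|T\|$ and $\widehat{S}\circ\iota_F=\iota_G\circ T$; therefore $\iota_G\circ(T\circ f\circ h)=\widehat{S}\circ(\iota_F\circ f)\circ h\in\I^{\Hv^\infty}(V,\ell_\infty(B_{G^*}))$ by (P3) for $\I^{\Hv^\infty}$, with norm at most $\|\widehat{S}\|\,\|\iota_F\circ f\|_{\I^{\Hv^\infty}}c_v(h)\leq\|T\|\,\|f\|_{(\I^{\Hv^\infty})^{inj}}c_v(h)$.

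The inclusion $[\I^{\Hv^\infty},\|\cdot\|_{\I^{\Hv^\infty}}]\leq[(\I^{\Hv^\infty})^{inj},\|\cdot\|_{(\I^{\Hv^\infty})^{inj}}]$ follows by applying (P3) for $\I^{\Hv^\infty}$ to $\iota_F\circ f=\iota_F\circ f\circ\mathrm{id}_U$, with $c_v(\mathrm{id}_U)=1$. For injectivity, let $\iota\colon F\to G$ be an into linear isometry with $\iota\circ f\in(\I^{\Hv^\infty})^{inj}(U,G)$, i.e. $\iota_G\circ\iota\circ f\in\I^{\Hv^\infty}(U,\ell_\infty(B_{G^*}))$; since $\iota_G\circ\iota$ is an isometric embedding, the assignment $\iota_G(\iota(y))\mapsto\iota_F(y)$ is an isometry of the subspace $(\iota_G\circ\iota)(F)\subseteq\ell_\infty(B_{G^*})$ into $\ell_\infty(B_{F^*})$, hence extends to $\widehat{R}\in\L(\ell_\infty(B_{G^*}),\ell_\infty(B_{F^*}))$ with $\|\widehat{R}\|\leq 1$ and $\widehat{R}\circ\iota_G\circ\iota=\iota_F$; so $\iota_F\circ f=\widehat{R}\circ(\iota_G\circ\iota\circ f)\in\I^{\Hv^\infty}(U,\ell_\infty(B_{F^*}))$ by (P3), which yields $f\in(\I^{\Hv^\infty})^{inj}(U,F)$ and $\|f\|_{(\I^{\Hv^\infty})^{inj}}\leq\|\iota\circ f\|_{(\I^{\Hv^\infty})^{inj}}$, while the reverse inequality is (P3) for $(\I^{\Hv^\infty})^{inj}$ applied to $\iota\circ f=\iota\circ f\circ\mathrm{id}_U$.

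Finally, if $[\J^{\Hv^\infty},\|\cdot\|_{\J^{\Hv^\infty}}]$ is injective with $\I^{\Hv^\infty}\leq\J^{\Hv^\infty}$, then for $f\in(\I^{\Hv^\infty})^{inj}(U,F)$ one has $\iota_F\circ f\in\I^{\Hv^\infty}(U,\ell_\infty(B_{F^*}))\subseteq\J^{\Hv^\infty}(U,\ell_\infty(B_{F^*}))$ with $\|\iota_F\circ f\|_{\J^{\Hv^\infty}}\leq\|\iota_F\circ f\|_{\I^{\Hv^\infty}}=\|f\|_{(\I^{\Hv^\infty})^{inj}}$, whence injectivity of $\J^{\Hv^\infty}$ forces $f\in\J^{\Hv^\infty}(U,F)$ with $\|f\|_{\J^{\Hv^\infty}}\leq\|f\|_{(\I^{\Hv^\infty})^{inj}}$, so $(\I^{\Hv^\infty})^{inj}\leq\J^{\Hv^\infty}$; uniqueness of the smallest injective normed weighted holomorphic ideal above $\I^{\Hv^\infty}$ is then immediate, since any two such are mutually $\leq$. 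I expect the only point genuinely requiring care to be the repeated use of the $1$-injectivity of $\ell_\infty(\Gamma)$ in the two factorizations; everything else is a bookkeeping of the axioms (P1)--(P3).
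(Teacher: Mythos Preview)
Your proof is correct and follows essentially the same route as the paper's: both verify (P1)--(P3) and injectivity via the metric extension property of $\ell_\infty(\Gamma)$, then establish the order relations with $\I^{\Hv^\infty}$ and with an arbitrary injective $\J^{\Hv^\infty}$ in the same way. You are in fact slightly more careful than the paper in two places---you spell out the completeness argument for the Banach case and keep the $c_v(h)$ factor in the (P3) estimate---but the structure and key ideas are identical.
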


\begin{proof}
Defining the set $(\I^{\Hv^\infty})^{inj}(U,F)$ and the function $\left\|\cdot\right\|_{(\I^{\Hv^\infty})^{inj}}\colon (\I^{\Hv^\infty})^{inj}(U,F)\to\mathbb{R}^+_0$ as above, we first show that $[(\I^{\Hv^\infty})^{inj},\left\|\cdot\right\|_{(\I^{\Hv^\infty})^{inj}}]$ is an injective normed (Banach) weighted holomorphic ideal.\\

(P1) Given $f\in(\I^{\Hv^\infty})^{inj}(U,F)$, for all $x\in U$, we have
$$
v(x)\left\|f(x)\right\|=v(x)\left\|\iota_F(f(x))\right\|\leq \left\|\iota_F\circ f\right\|_v\leq \left\|\iota_F\circ f\right\|_{\I^{\Hv^\infty}}=\left\|f\right\|_{(\I^{\Hv^\infty})^{inj}},
$$
and thus $\left\|f\right\|_v\leq\left\|f\right\|_{(\I^{\Hv^\infty})^{inj}}$. Hence $f=0$ whenever $\left\|f\right\|_{(\I^{\Hv^\infty})^{inj}}=0$. It is readily to prove that $(\I^{\Hv^\infty})^{inj}(U,F)$ is a linear subspace of $\Hv^\infty(U,F)$ on which $\left\|\cdot\right\|_{\Hv^\infty}$ is absolutely homogeneous and satisfies the triangle inequality. \\

(P2) Given $h\in\Hv^\infty(U)$ and $y\in F$, we have $\iota_F\circ(h \cdot y)=h\cdot\iota_F(y)\in\I^{\Hv^\infty}(U,\ell_\infty(B_{F^*}))$ and therefore $h\cdot y\in(\I^{\Hv^\infty})^{inj}(U,F)$ with 
$\left\|h\cdot y\right\|_{(\I^{\Hv^\infty})^{inj}}=\left\|\iota_F\circ(h \cdot y)\right\|_{\I^{\Hv^\infty}}=\left\|h\cdot\iota_F(y)\right\|_{\I^{\Hv^\infty}}=\left\|h\right\|_v\left\|\iota_F(y)\right\|=\left\|h\right\|_v\left\|y\right\|$.\\

(P3) Let $V\subseteq E$ be an open set such that $V\subseteq U$, $h\in\H(V,U)$ with $c_v(h):=\sup_{x\in V}(v(x)/v(h(x)))<\infty$, $f\in\I^{\Hv^\infty}(U,F)$ and $T\in\L(F,G)$ where $G$ is a complex Banach space. Clearly, $\iota_F\circ f\in\I^{\Hv^\infty}(U,\ell_\infty(B_{F^*}))$. Since $\iota_F$ is an into linear isometry, there exists $S\in\L(\ell_\infty(B_{F^*}),\ell_\infty(B_{G^*}))$ such that $S\circ\iota_F=\iota_G\circ T$ and $\left\|S\right\|=\left\|\iota_F\circ T\right\|$ by the metric extension property of $\ell_\infty(B_{F^*})$ (see, for example, \cite[Proposition C.3.2.1]{Pie-80}). 
From $\iota_G\circ (T\circ f\circ h)=S\circ(\iota_F\circ f)\circ h\in\I^{\Hv^\infty}(U,\ell_\infty(B_{G^*}))$, we infer that $T\circ f\circ h\in(\I^{\Hv^\infty})^{inj}(U,G)$ with
\begin{align*}
\left\|T\circ f\circ h\right\|_{(\I^{\Hv^\infty})^{inj}}
&=\left\|\iota_G\circ T\circ f\circ h\right\|_{\I^{\Hv^\infty}}
=\left\|S\circ \iota_F\circ f\circ h\right\|_{\I^{\Hv^\infty}}\\
&\leq\left\|S\right\|\left\|\iota_F\circ f\right\|_{\I^{\Hv^\infty}}
=\left\|\iota_G\circ T\right\|\left\|f\right\|_{(\I^{\Hv^\infty})^{inj}}
\leq\left\|T\right\|\left\|f\right\|_{(\I^{\Hv^\infty})^{inj}}
\end{align*}

(I) Let $f\in\Hv^\infty(U,F)$ so that $\iota\circ f\in(\I^{\Hv^\infty})^{inj}(U,G)$ for any into linear isometry $\iota\colon F\to G$. The metric extension property of $\ell_\infty(B_{F^*})$ provides a $P\in\L(\ell_\infty(B_{G^*}),\ell_\infty(B_{F^*}))$ so that $P\circ\iota_G\circ\iota=\iota_F$ and $\left\|P\right\|=\left\|\iota_F\right\|=1$. 
The conditions $\iota_G\circ\iota\circ f\in\I^{\Hv^\infty}(U,\ell_\infty(B_{G^*}))$ and $\iota_F\circ f=P\circ\iota_G\circ\iota\circ f$ imply $\iota_F\circ f\in\I^{\Hv^\infty}(U,\ell_\infty(B_{F^*}))$, and so $f\in(\I^{\Hv^\infty})^{inj}(U,F)$ with  
$$
\left\|f\right\|_{(\I^{\Hv^\infty})^{inj}}
=\left\|\iota_F\circ f\right\|_{\I^{\Hv^\infty}}
=\left\|P\circ\iota_G\circ\iota\circ f\right\|_{\I^{\Hv^\infty}}
\leq\left\|P\right\|\left\|\iota_G\circ\iota\circ f\right\|_{\I^{{\Hv^\infty}}}
=\left\|\iota\circ f\right\|_{(\I^{\Hv^\infty})^{inj}}\leq\left\|f\right\|_{(\I^{\Hv^\infty})^{inj}}.
$$

On a hand, the ideal property of $[\I^{\Hv^\infty},\left\|\cdot\right\|_{\I^{\Hv^\infty}}]$ yields $[\I^{\Hv^\infty},\left\|\cdot\right\|_{\I^{\Hv^\infty}}]\leq [(\I^{\Hv^\infty})^{inj},\left\|\cdot\right\|_{(\I^{\Hv^\infty})^{inj}}]$. On the other hand, suppose $[\J^{\Hv^\infty},\left\|\cdot\right\|_{\J^{\Hv^\infty}}]$ is an injective normed weighted holomorphic ideal so that $[\I^{\Hv^\infty},\left\|\cdot\right\|_{\I^{\Hv^\infty}}]\leq[\J^{\Hv^\infty},\left\|\cdot\right\|_{\J^{\Hv^\infty}}]$. If $f\in(\I^{\Hv^\infty})^{inj}(U,F)$, one has that $\iota_F\circ f\in\I^{\Hv^\infty}(U,\ell_\infty(B_{F^*}))\subseteq\J^{\Hv^\infty}(U,\ell_\infty(B_{F^*}))$, hence $f\in\J^{\Hv^\infty}(U,F)$ with $\left\|f\right\|_{\J^{\Hv^\infty}}=\left\|\iota_F\circ f\right\|_{\J^{\Hv^\infty}}$ by the injectivity of $\J^{\Hv^\infty}$, and so $\left\|f\right\|_{\J^{\Hv^\infty}}=\left\|\iota_F\circ f\right\|_{\J^{\Hv^\infty}}\leq \left\|\iota_F\circ f\right\|_{\I^{\Hv^\infty}}=\left\|f\right\|_{(\I^{\Hv^\infty})^{inj}}$. 

The uniqueness of $[(\I^{\Hv^\infty})^{inj},\left\|\cdot\right\|_{(\I^{\Hv^\infty})^{inj}}]$ follows easily and this completes the proof.
\end{proof}

Based on the linear and polynomial variants in \cite[Proposition 19.2.2]{Jar-81} and \cite[Corollary 2.4]{BotTor-20}, respectively, the injectivity of a normed weighted holomorphic ideal is characterized by the coincidence with its injective hull. 

\begin{corollary}\label{c1}
Let $[\I^{\Hv^\infty},\|\cdot\|_{\I^{\Hv^\infty}}]$ be a normed weighted holomorphic ideal. The following are equivalent:
\begin{enumerate}
\item  $[\I^{\Hv^\infty},\|\cdot\|_{\I^{\Hv^\infty}}]$ is injective.
\item $[\I^{\Hv^\infty},\|\cdot\|_{\I^{\Hv^\infty}}]=[(\I^{\Hv^\infty})^{inj},\left\|\cdot\right\|_{(\I^{\Hv^\infty})^{inj}}]$. 
\end{enumerate}
$\hfill\qed$
\end{corollary}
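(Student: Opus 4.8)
The plan is to derive both implications directly from Proposition~\ref{p1}, which already provides $[(\I^{\Hv^\infty})^{inj},\|\cdot\|_{(\I^{\Hv^\infty})^{inj}}]$ as an injective normed weighted holomorphic ideal, namely the smallest one dominating $[\I^{\Hv^\infty},\|\cdot\|_{\I^{\Hv^\infty}}]$ in the order $\leq$, and which in particular always satisfies $[\I^{\Hv^\infty},\|\cdot\|_{\I^{\Hv^\infty}}]\leq[(\I^{\Hv^\infty})^{inj},\|\cdot\|_{(\I^{\Hv^\infty})^{inj}}]$. So nothing new needs to be constructed; one only has to invoke the minimality clause and the antisymmetry of $\leq$.

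For $(1)\Rightarrow(2)$: assuming $[\I^{\Hv^\infty},\|\cdot\|_{\I^{\Hv^\infty}}]$ is injective, it is itself an injective normed weighted holomorphic ideal dominating $[\I^{\Hv^\infty},\|\cdot\|_{\I^{\Hv^\infty}}]$ (via the trivial relation $[\I^{\Hv^\infty},\|\cdot\|_{\I^{\Hv^\infty}}]\leq[\I^{\Hv^\infty},\|\cdot\|_{\I^{\Hv^\infty}}]$). The minimality part of Proposition~\ref{p1} then gives $[(\I^{\Hv^\infty})^{inj},\|\cdot\|_{(\I^{\Hv^\infty})^{inj}}]\leq[\I^{\Hv^\infty},\|\cdot\|_{\I^{\Hv^\infty}}]$. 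Combining this with the reverse domination noted above and observing that the relation $\leq$ is antisymmetric on normed weighted holomorphic ideals — two mutual dominations force equality of the underlying sets and of the norms, straight from the definition of $\leq$ — one obtains (2).

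For $(2)\Rightarrow(1)$: if $[\I^{\Hv^\infty},\|\cdot\|_{\I^{\Hv^\infty}}]=[(\I^{\Hv^\infty})^{inj},\|\cdot\|_{(\I^{\Hv^\infty})^{inj}}]$, then $[\I^{\Hv^\infty},\|\cdot\|_{\I^{\Hv^\infty}}]$ inherits injectivity from $[(\I^{\Hv^\infty})^{inj},\|\cdot\|_{(\I^{\Hv^\infty})^{inj}}]$, which was shown to be injective in Proposition~\ref{p1}.

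I expect no real obstacle here: the corollary is an immediate consequence of Proposition~\ref{p1}, and the only point deserving an explicit word is the antisymmetry of $\leq$, which is trivial. One could even fold the argument into a single paragraph, but keeping the two implications separate makes the reference to the minimality statement transparent.
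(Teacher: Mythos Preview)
Your proposal is correct and matches the paper's approach: the corollary is stated with a $\qed$ and no proof, signalling that it follows immediately from Proposition~\ref{p1}. Your explicit unpacking of the two implications via the minimality clause and antisymmetry of $\leq$ is exactly the intended argument.
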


Influenced by the hull procedure for the family of normed operator ideals -- introduced by Pietsch in \cite[Section 8.1]{Pie-80} --, we obtain that the correspondence $\I^{\Hv^\infty}\mapsto(\I^{\Hv^\infty})^{inj}$ is a hull procedure in the weighted holomorphic setting. 

\begin{proposition}\label{p2}
If $[\I^{\Hv^\infty},\|\cdot\|_{\I^{\Hv^\infty}}]$ and $[\J^{\Hv^\infty},\|\cdot\|_{\J^{\Hv^\infty}}]$ are normed (Banach) weighted holomorphic ideals, then: 
\begin{enumerate}
	\item $[(\I^{\Hv^\infty})^{inj},\|\cdot\|_{(\I^{\Hv^\infty})^{inj}}]$ is a normed (Banach) weighted holomorphic ideal,
	\item $[(\I^{\Hv^\infty})^{inj},\|\cdot\|_{(\I^{\Hv^\infty})^{inj}}]\leq [(\J^{\Hv^\infty})^{inj},\|\cdot\|_{(\J^{\Hv^\infty})^{inj}}]$ whenever $[\I^{\Hv^\infty},\|\cdot\|_{\I^{\Hv^\infty}}]\leq [\J^{\Hv^\infty},\|\cdot\|_{\J^{\Hv^\infty}}]$,
	\item $[((\I^{\Hv^\infty})^{inj})^{inj},\|\cdot\|_{((\I^{\Hv^\infty})^{inj})^{inj}}]=[(\I^{\Hv^\infty})^{inj},\|\cdot\|_{(\I^{\Hv^\infty})^{inj}}]$,
	\item $[\I^{\Hv^\infty},\|\cdot\|_{\I^{\Hv^\infty}}]\leq [(\I^{\Hv^\infty})^{inj},\|\cdot\|_{(\I^{\Hv^\infty})^{inj}}]$.
\end{enumerate}
\end{proposition}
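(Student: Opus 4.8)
The plan is to notice that three of the four assertions are already contained, essentially verbatim, in Proposition~\ref{p1} and Corollary~\ref{c1}, so the only genuinely new point is the monotonicity statement~(2). For item~(1) I would simply invoke Proposition~\ref{p1}, which establishes that $[(\I^{\Hv^\infty})^{inj},\|\cdot\|_{(\I^{\Hv^\infty})^{inj}}]$ satisfies (P1), (P2) and (P3) and is moreover a Banach space whenever $[\I^{\Hv^\infty},\|\cdot\|_{\I^{\Hv^\infty}}]$ is. Item~(4) is likewise exactly the inequality $[\I^{\Hv^\infty},\|\cdot\|_{\I^{\Hv^\infty}}]\leq[(\I^{\Hv^\infty})^{inj},\|\cdot\|_{(\I^{\Hv^\infty})^{inj}}]$ proved there (by applying (P3) to the isometric embedding $\iota_F$ and using $\|\iota_F\circ f\|_v=\|f\|_v$). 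For item~(3), since Proposition~\ref{p1} shows that $[(\I^{\Hv^\infty})^{inj},\|\cdot\|_{(\I^{\Hv^\infty})^{inj}}]$ is an \emph{injective} normed weighted holomorphic ideal, Corollary~\ref{c1} applied to it yields at once that it coincides with its own injective hull, i.e.\ $[((\I^{\Hv^\infty})^{inj})^{inj},\|\cdot\|_{((\I^{\Hv^\infty})^{inj})^{inj}}]=[(\I^{\Hv^\infty})^{inj},\|\cdot\|_{(\I^{\Hv^\infty})^{inj}}]$.

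The remaining item~(2) I would prove directly from the explicit description of the injective hull given in Proposition~\ref{p1}. Assume $[\I^{\Hv^\infty},\|\cdot\|_{\I^{\Hv^\infty}}]\leq[\J^{\Hv^\infty},\|\cdot\|_{\J^{\Hv^\infty}}]$, and fix a complex Banach space $E$, an open set $U\subseteq E$ and a complex Banach space $F$. Let $f\in(\I^{\Hv^\infty})^{inj}(U,F)$. By definition $\iota_F\circ f\in\I^{\Hv^\infty}(U,\ell_\infty(B_{F^*}))$, so the hypothesis gives $\iota_F\circ f\in\J^{\Hv^\infty}(U,\ell_\infty(B_{F^*}))$ with $\|\iota_F\circ f\|_{\J^{\Hv^\infty}}\leq\|\iota_F\circ f\|_{\I^{\Hv^\infty}}$. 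Hence $f\in(\J^{\Hv^\infty})^{inj}(U,F)$ and
$$
\|f\|_{(\J^{\Hv^\infty})^{inj}}=\|\iota_F\circ f\|_{\J^{\Hv^\infty}}\leq\|\iota_F\circ f\|_{\I^{\Hv^\infty}}=\|f\|_{(\I^{\Hv^\infty})^{inj}},
$$
which is precisely $[(\I^{\Hv^\infty})^{inj},\|\cdot\|_{(\I^{\Hv^\infty})^{inj}}]\leq[(\J^{\Hv^\infty})^{inj},\|\cdot\|_{(\J^{\Hv^\infty})^{inj}}]$. The uniqueness part already observed in Proposition~\ref{p1} can be quoted if a coincidence of the two descriptions is needed.

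I do not expect any real obstacle: the whole statement is bookkeeping that records that $\I^{\Hv^\infty}\mapsto(\I^{\Hv^\infty})^{inj}$ is a hull procedure in Pietsch's sense. The only point that deserves a line of care is that in~(2) the canonical embedding $\iota_F\colon F\to\ell_\infty(B_{F^*})$ appearing in the definition of $(\I^{\Hv^\infty})^{inj}(U,F)$ and of $(\J^{\Hv^\infty})^{inj}(U,F)$ is one and the same, so the inclusion of ideals transfers term by term and the norm comparison is immediate; and, for the parenthetical ``Banach'' clause of~(1), that completeness of $(\I^{\Hv^\infty})^{inj}(U,F)$ is not re-proved here but taken from Proposition~\ref{p1}.
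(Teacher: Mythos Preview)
Your proposal is correct and matches the paper's own proof essentially verbatim: the paper deduces (i) and (iv) from Proposition~\ref{p1}, (iii) from Corollary~\ref{c1}, and says (ii) ``follows as in the last part of the proof of Proposition~\ref{p1}'', which is precisely the direct argument via $\iota_F\circ f$ that you spell out. Your treatment of (ii) is in fact a bit more explicit than the paper's, but the idea is identical.
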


\begin{proof}
$(i)$ and $(iv)$ are deduced from Proposition \ref{p1}, $(iii)$ from Corollary \ref{c1}, and $(ii)$ follows as in the last part of the proof of Proposition \ref{p1}. 
\end{proof}


\subsection{The domination property}\label{subsection 2}

The injective hull of a normed weighted holomorphic ideal can be characterized by the following domination property. This result is based on both the linear and polynomial versions stated respectively in \cite[Lemma 3.1]{BotCamSan-17} and \cite[Theorem 3.4]{BotTor-20}.

\begin{theorem}\label{t1}
Let $[\I^{\Hv^\infty},\left\|\cdot\right\|_{\I^{\Hv^\infty}}]$ be a normed weighted holomorphic ideal, let $F$ be a complex Banach space and let $f\in\Hv^\infty(U,F)$. The following assertions are equivalent:
\begin{enumerate}
\item $f$ belongs to $(\I^{\Hv^\infty})^{inj}(U,F)$.
\item There exists a complex normed space $G$ and a mapping $g\in\I^{\Hv^\infty}(U,G)$ such that
$$
\left\|\sum_{i=1}^n\lambda_i v(x_i)f(x_i)\right\|\leq\left\|\sum_{i=1}^n\lambda_iv(x_i)g(x_i)\right\|
$$
for all $n\in\N$, $\lambda_1,\ldots,\lambda_n\in\C$ and $x_1,\ldots,x_n\in U$. 
\end{enumerate}
In this case, $\left\|f\right\|_{(\I^{\Hv^\infty})^{inj}}=\inf\left\{\left\|g\right\|_{\I^{\Hv^\infty}}\right\}$, where the infimum is taken over all spaces $G$ and all mappings $g\in\I^{\Hv^\infty}(U,G)$ as in $(ii)$, and this infimum is attained.
\end{theorem}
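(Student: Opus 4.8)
The plan is to exploit the concrete description of the injective hull obtained in Proposition \ref{p1}: a map $f\in\Hv^\infty(U,F)$ lies in $(\I^{\Hv^\infty})^{inj}(U,F)$ exactly when $\iota_F\circ f\in\I^{\Hv^\infty}(U,\ell_\infty(B_{F^*}))$, and in that case $\left\|f\right\|_{(\I^{\Hv^\infty})^{inj}}=\left\|\iota_F\circ f\right\|_{\I^{\Hv^\infty}}$. So the whole theorem reduces to translating membership of $\iota_F\circ f$ into the domination inequality and back.

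For $(i)\Rightarrow(ii)$, and for the ``$\geq$'' half of the norm formula (including attainment), I would simply take $G=\ell_\infty(B_{F^*})$ and $g=\iota_F\circ f$, which lies in $\I^{\Hv^\infty}(U,G)$ by hypothesis. Since $\iota_F$ is a linear isometry, for every $n\in\N$, $\lambda_1,\dots,\lambda_n\in\C$ and $x_1,\dots,x_n\in U$ one has $\left\|\sum_{i=1}^n\lambda_i v(x_i)f(x_i)\right\|=\left\|\iota_F(\sum_{i=1}^n\lambda_i v(x_i)f(x_i))\right\|=\left\|\sum_{i=1}^n\lambda_i v(x_i)g(x_i)\right\|$, so the domination inequality holds (with equality), while $\left\|g\right\|_{\I^{\Hv^\infty}}=\left\|\iota_F\circ f\right\|_{\I^{\Hv^\infty}}=\left\|f\right\|_{(\I^{\Hv^\infty})^{inj}}$; this particular $g$ therefore witnesses both $(ii)$ and that the infimum is attained and is at most $\left\|f\right\|_{(\I^{\Hv^\infty})^{inj}}$.

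For the substantive direction $(ii)\Rightarrow(i)$, suppose $G$ and $g\in\I^{\Hv^\infty}(U,G)$ satisfy the domination inequality. On the linear subspace $M:=\lin\{v(x)g(x):x\in U\}\subseteq G$, I would define $T_0(\sum_{i}\lambda_i v(x_i)g(x_i)):=\sum_{i}\lambda_i v(x_i)\iota_F(f(x_i))$. The chain $\left\|\sum_{i}\lambda_i v(x_i)\iota_F(f(x_i))\right\|=\left\|\sum_{i}\lambda_i v(x_i)f(x_i)\right\|\leq\left\|\sum_{i}\lambda_i v(x_i)g(x_i)\right\|$ (isometry of $\iota_F$, then domination) shows at once that $T_0$ is well defined -- the left side vanishes whenever the right side does -- and that $\left\|T_0\right\|\leq 1$. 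I then extend $T_0$ by continuity to $\overline{M}$ (using completeness of $\ell_\infty(B_{F^*})$) and, by the metric extension property of $\ell_\infty(B_{F^*})$, further to an operator $T\in\L(G,\ell_\infty(B_{F^*}))$ with $\left\|T\right\|\leq 1$. Since $v(x)>0$, the defining relation gives $T(g(x))=\iota_F(f(x))$ for all $x\in U$, i.e. $T\circ g=\iota_F\circ f$. Now apply the ideal property (P3) with $V=U$, $h=\mathrm{id}_U$ (so $c_v(h)=1$), the map $g\in\I^{\Hv^\infty}(U,G)$ and the operator $T$: this yields $\iota_F\circ f=T\circ g\in\I^{\Hv^\infty}(U,\ell_\infty(B_{F^*}))$ with $\left\|\iota_F\circ f\right\|_{\I^{\Hv^\infty}}\leq\left\|T\right\|\left\|g\right\|_{\I^{\Hv^\infty}}\leq\left\|g\right\|_{\I^{\Hv^\infty}}$. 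By Proposition \ref{p1}, $f\in(\I^{\Hv^\infty})^{inj}(U,F)$ and $\left\|f\right\|_{(\I^{\Hv^\infty})^{inj}}=\left\|\iota_F\circ f\right\|_{\I^{\Hv^\infty}}\leq\left\|g\right\|_{\I^{\Hv^\infty}}$; taking the infimum over all admissible $(G,g)$ gives the ``$\leq$'' half of the norm identity, which together with the first paragraph proves equality and attainment.

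The main obstacle is really only organisational: one must notice that the domination inequality is precisely the estimate that makes $T_0$ both well defined and contractive on $M$, and one must be careful that $G$ is merely a normed space, so the extension to $\overline{M}$ leans on completeness of $\ell_\infty(B_{F^*})$ and the final extension to all of $G$ leans on the Hahn--Banach/metric extension property of $\ell_\infty(B_{F^*})$, not on any structure of $G$. The remaining points -- that $\mathrm{id}_U\in\H(U,U)$ with $c_v(\mathrm{id}_U)=1$, so (P3) applies with constant $1$, and the elementary verification that $\overline{M}$ is a subspace -- are routine.
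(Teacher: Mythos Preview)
Your proof is correct and follows essentially the same route as the paper: the same witness $G=\ell_\infty(B_{F^*})$, $g=\iota_F\circ f$ for $(i)\Rightarrow(ii)$, and for $(ii)\Rightarrow(i)$ the same contraction on $\lin\{v(x)g(x)\}$ followed by the metric extension property of $\ell_\infty(B_{F^*})$. The only cosmetic differences are that you exploit the isometry of $\iota_F$ directly (yielding equality rather than the paper's one-sided estimate via a norming functional) and that you map $T_0$ straight into $\ell_\infty(B_{F^*})$ rather than first into $F$ and then post-composing with $\iota_F$; both are mild streamlinings of the same argument.
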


\begin{proof}
$(i)\Rightarrow(ii)$: Suppose  that $f\in(\I^{\Hv^\infty})^{inj}(U,F)$. Take $G=\ell_\infty(B_{F^*})$ and $g=\iota_F\circ f$. Clearly, $g\in\I^{\Hv^\infty}(U,G)$ with $\left\|g\right\|_{\I^{\Hv^\infty}}=\left\|\iota_F\circ f\right\|_{\I^{\Hv^\infty}}=\left\|f\right\|_{(\I^{\Hv^\infty})^{inj}}$. Set $n\in\N$, $\lambda_1,\ldots,\lambda_n\in\C$ and $ x_1,\ldots, x_n\in U$.  
We can take some $y^*\in B_{E^*}$ so that $\left|y^*\left(\sum_{i=1}^n\lambda_iv(x_i)f( x_i)\right)\right|=\left\|\sum_{i=1}^n\lambda_i v(x_i)f(x_i)\right\|$, and thus 
\begin{align*}
\left\|\sum_{i=1}^n\lambda_i v(x_i)f( x_i)\right\|
&=\left|\sum_{i=1}^n\lambda_iv(x_i)y^*( f( x_i))\right|=\left|\sum_{i=1}^n\lambda_iv(x_i)\left\langle \iota_F( f( x_i)),y^*\right\rangle\right|\\
&=\left|\sum_{i=1}^n\lambda_iv(x_i)\left\langle  g( x_i),y^*\right\rangle\right|\leq\sup_{x^*\in B_{E^*}}\left|\sum_{i=1}^n\lambda_iv(x_i)\left\langle  g( x_i),x^*\right\rangle\right|\\
&=\sup_{x^*\in B_{E^*}}\left|\left\langle\sum_{i=1}^n\lambda_iv(x_i)g(x_i),x^*\right\rangle\right|=\left\|\sum_{i=1}^n\lambda_iv(x_i)g(x_i)\right\|
\end{align*}

$(ii)\Rightarrow(i)$: Let $G$ and $g$ be as in $(ii)$. Take $G_0=\lin( g(U))\subseteq G$ and $T_0\colon G_0\to F$ given by
$$
T_0\left(\sum_{i=1}^n\lambda_i v(x_i)g( x_i)\right)=\sum_{i=1}^n\lambda_i v(x_i)f( x_i)
$$
for all $n\in\N$, $\lambda_1,\ldots,\lambda_n\in\C$ and $ x_1,\ldots, x_n\in U$. Note that $T_0$ is well defined since   
\begin{align*}
\sum_{i=1}^n\lambda_i v(x_i)g( x_i)=\sum_{j=1}^m \alpha_j v(x_j)g( x_j)&\Rightarrow\left|\sum_{i=1}^n\lambda_i v(x_i)g( x_i)-\sum_{j=1}^m\alpha_j v(x_j)g( x_j)\right|=0\\
&\Rightarrow\left|\sum_{i=1}^n\lambda_i v(x_i)f( x_i)-\sum_{j=1}^m\alpha_j v(x_j)f( x_j)\right|=0\\
&\Rightarrow \sum_{i=1}^n\lambda_iv(x_i) f( x_i)=\sum_{j=1}^m\alpha_j v(x_j)f( x_j),
\end{align*}
by using the inequality in $(ii)$. The linearity of $T_0$ is clear, and since 
$$
\left\|T_0\left(\sum_{i=1}^n\lambda_iv(x_i) g( x_i)\right)\right\|=\left\|\sum_{i=1}^n\lambda_iv(x_i) f( x_i)\right\|\leq\left\|\sum_{i=1}^n\lambda_iv(x_i) g( x_i)\right\|
$$
for all $n\in\N$, $\lambda_1,\ldots,\lambda_n\in\C$ and $ x_1,\ldots, x_n\in U$, we deduce that $T_0$ is continuous with $\left\|T_0\right\|\leq 1$. There exists a unique operator $T\in\L(\overline{G_0},F)$ such that $T|_{G_0}=T_0$ and $\left\|T\right\|=\left\|T_0\right\|$. If $\iota\colon \overline{G_0}\to G$ is the inclusion operator, the metric extension property of $\ell_\infty(B_{G^*})$ yields an operator $S\in\L(G,\ell_\infty(B_{F^*}))$ so that $\iota_F\circ T=S\circ\iota$ and $\left\|S\right\|=\left\|\iota_F\circ T\right\|$. 
Since $(T\circ  g)(x)=T(g(x))=T_0( g(x))= f(x)$ for all $x\in U$, we have $T\circ  g= f$, and thus $\iota_F\circ  f=\iota_F\circ T\circ  g=S\circ\iota\circ  g=S\circ  g$. 
Since $g\in\I^{\Hv^\infty}(U,G)$, the ideal property of $\I^{\Hv^\infty}$ shows that $\iota_F\circ f\in\I^{\Hv^\infty}( U,\ell_\infty(B_{F^*}))$, that is, $f\in(\I^{\Hv^\infty})^{inj}(U,F)$ with $\left\|f\right\|_{(\I^{\Hv^\infty})^{inj}}=\left\|\iota_F\circ f\right\|_{\I^{\Hv^\infty}}\leq \left\|S\right\|\left\|g\right\|_{\I^{\Hv^\infty}}\leq\left\|g\right\|_{\I^{\Hv^\infty}}$. Passing to the infimum over all $G'$s and $g'$s as in $(ii)$, we conclude that $\left\|f\right\|_{(\I^{\Hv^\infty})^{inj}}\leq\inf\{\left\|g\right\|_{\I^{\Hv^\infty}}\}$.
\end{proof}

The combination of Corollary \ref{c1} and Theorem \ref{t1} immediately provides the next characterization of the injectivity of a normed weighted holomorphic ideal, that can be compared with its linear version \cite[Lemma 3.1]{BotCamSan-17} and its polynomial version \cite[Theorem 3.4]{BotTor-20}. 

\begin{corollary}\label{c3}
Let $[\I^{\Hv^\infty},\left\|\cdot\right\|_{\I^{\Hv^\infty}}]$ be a normed weighted holomorphic ideal. Then $[\I^{\Hv^\infty},\left\|\cdot\right\|_{\I^{\Hv^\infty}}]$ is injective if, and only if, given complex Banach spaces $F,G$ and mappings $f\in\Hv^\infty(U,F)$, $g\in\I^{\Hv^\infty}(U,G)$ such that 
$$
\left\|\sum_{i=1}^n\lambda_iv(x_i) f( x_i)\right\|\leq \left\|\sum_{i=1}^n\lambda_iv(x_i) g( x_i)\right\|
$$
for all $n\in\N$, $\lambda_1,\ldots,\lambda_n\in\C$ and $ x_1,\ldots, x_n\in U$, then $f\in\I^{\Hv^\infty}(U,F)$ and $\left\|f\right\|_{\I^{\Hv^\infty}}=\inf\left\{\left\|g\right\|_{\I^{\Hv^\infty}}\right\}$, where the infimum is taken over all complex Banach spaces $G$ and all such mappings $g$. $\hfill\Box$
\end{corollary}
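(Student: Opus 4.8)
The plan is to obtain this statement as a formal consequence of Corollary \ref{c1} and Theorem \ref{t1}, with no new construction required. Corollary \ref{c1} says that injectivity of $[\I^{\Hv^\infty},\|\cdot\|_{\I^{\Hv^\infty}}]$ is equivalent to its isometric coincidence with its injective hull $[(\I^{\Hv^\infty})^{inj},\|\cdot\|_{(\I^{\Hv^\infty})^{inj}}]$, while Theorem \ref{t1} characterizes membership in $(\I^{\Hv^\infty})^{inj}(U,F)$, and computes the hull norm, precisely through the domination inequality that appears in the present statement. The key point I would rely on is that the infimum in Corollary \ref{c3} ranges over exactly the same family of pairs $(G,g)$ — a Banach space $G$ and a map $g\in\I^{\Hv^\infty}(U,G)$ dominating $f$ in the stated sense — as the infimum in the closing clause of Theorem \ref{t1}, so the two are literally the same number. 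Before starting I would dispose of the cosmetic discrepancy that Theorem \ref{t1}$(ii)$ allows $G$ to be merely normed: replacing such a $G$ by its completion and corestricting $g$ changes neither $\|g\|_{\I^{\Hv^\infty}}$ nor the inequality, and in any case the witness $G=\ell_\infty(B_{F^*})$, $g=\iota_F\circ f$ produced in the proof of Theorem \ref{t1} is a Banach-space witness that attains the infimum.

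For the implication ``injective $\Rightarrow$ domination condition'', I would assume injectivity, use Corollary \ref{c1} to identify $\I^{\Hv^\infty}(U,F)$ with $(\I^{\Hv^\infty})^{inj}(U,F)$ isometrically, and then, given $f\in\Hv^\infty(U,F)$ and $g\in\I^{\Hv^\infty}(U,G)$ satisfying the domination inequality, invoke the implication $(ii)\Rightarrow(i)$ of Theorem \ref{t1} to put $f$ in $(\I^{\Hv^\infty})^{inj}(U,F)=\I^{\Hv^\infty}(U,F)$. The final clause of Theorem \ref{t1} then gives $\|f\|_{(\I^{\Hv^\infty})^{inj}}=\inf\{\|g\|_{\I^{\Hv^\infty}}\}$ over all admissible $(G,g)$, and combining this with the isometry of Corollary \ref{c1} yields $\|f\|_{\I^{\Hv^\infty}}=\inf\{\|g\|_{\I^{\Hv^\infty}}\}$, as wanted.

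For the converse I would assume the domination condition holds and aim, again via Corollary \ref{c1}, to prove the isometric identity $[\I^{\Hv^\infty},\|\cdot\|_{\I^{\Hv^\infty}}]=[(\I^{\Hv^\infty})^{inj},\|\cdot\|_{(\I^{\Hv^\infty})^{inj}}]$. One inequality, $[\I^{\Hv^\infty},\|\cdot\|_{\I^{\Hv^\infty}}]\leq[(\I^{\Hv^\infty})^{inj},\|\cdot\|_{(\I^{\Hv^\infty})^{inj}}]$, is Proposition \ref{p2}$(iv)$; for the reverse I would take $f\in(\I^{\Hv^\infty})^{inj}(U,F)$, apply $(i)\Rightarrow(ii)$ of Theorem \ref{t1} to obtain an admissible pair $(G,g)$ dominating $f$, and then invoke the hypothesis to conclude $f\in\I^{\Hv^\infty}(U,F)$ with $\|f\|_{\I^{\Hv^\infty}}=\inf\{\|g\|_{\I^{\Hv^\infty}}\}$. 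Since Theorem \ref{t1} identifies that same infimum with $\|f\|_{(\I^{\Hv^\infty})^{inj}}$, the two normed ideals coincide, and Corollary \ref{c1} gives injectivity.

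I do not anticipate a genuine obstacle: the argument is essentially bookkeeping around two results already established. The two points I would watch most carefully are (a) verifying that the families of witnesses $(G,g)$ in Corollary \ref{c3} and in Theorem \ref{t1} are the same set, so that the two infima are equal; and (b) in the converse direction, remembering that Corollary \ref{c1} requires coincidence of the \emph{norms}, not merely of the underlying spaces of maps — and noting that this is exactly what the equality of infima provides.
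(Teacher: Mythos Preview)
Your proposal is correct and follows exactly the approach the paper intends: the authors state that Corollary~\ref{c3} follows immediately from the combination of Corollary~\ref{c1} and Theorem~\ref{t1}, and the (suppressed) detailed argument proceeds just as you outline, passing through the identification $\|f\|_{\I^{\Hv^\infty}}=\|f\|_{(\I^{\Hv^\infty})^{inj}}=\inf\{\|g\|_{\I^{\Hv^\infty}}\}$ in both directions. Your attention to the normed-versus-Banach discrepancy for $G$ and to the need for isometric (not merely set-theoretic) coincidence in Corollary~\ref{c1} is apt and handles the only subtleties present.
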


\subsection{The injective hull of composition ideals of weighted holomorphic mappings}\label{subsection 3}

According to \cite[Definition 2.5]{CabJimKet-24}, given a normed operator ideal $[\I,\left\|\cdot\right\|_\I]$, a map $f\in\H(U,F)$ belongs to the composition ideal $\I\circ\Hv^\infty(U,F)$ if there exist a complex Banach space $G$, an operator $T\in\I(G,F)$ and a map $g\in\Hv^\infty(U,G)$ such that $f=T\circ g$. For any $f\in\I\circ\Hv^\infty(U,F)$, define 
$$
\left\|f\right\|_{\I\circ\Hv^\infty}=\inf\left\{\left\|T\right\|_\I \left\|g\right\|_v\right\},
$$
where the infimum is extended over all such factorizations of $f$. By \cite[Proposition 2.6]{CabJimKet-24}, $[\I\circ\Hv^\infty,\left\|\cdot\right\|_{\I\circ\Hv^\infty}]$ is a normed weighted holomorphic ideal.
 
We now describe the injective hull of this ideal $[\I\circ\Hv^\infty,\left\|\cdot\right\|_{\I\circ\Hv^\infty}]$. Our approach requires some preliminaries about the linearization of weighted holomorphic maps. 

Following \cite{BieSum-93,GupBaw-16}, $\G_v^\infty(U)$ is the space of all linear functionals on $\H_v^\infty(U)$ whose restriction to $B_{\H_v^\infty(U)}$ is continuous for the compact-open topology. The following result collects the properties of $\G_v^\infty(U)$ that we will need later.

\begin{theorem}\label{t0}\cite{BieSum-93,BonFri-02,GupBaw-16,Muj-91}
Let $U$ be an open set of a complex Banach space $E$ and $v$ be a weight on $U$.
\begin{enumerate}
\item  $\G_v^\infty(U)$ is a closed subspace of $\H_v^\infty(U)^*$, and the evaluation mapping $J_v\colon\H_v^\infty(U)\to\G_v^\infty(U)^*$, given by $J_v(f)(\phi)=\phi(f)$ for $\phi\in\G_v^\infty(U)$ and $f\in\H_v^\infty(U)$, is an isometric isomorphism.
\item For each $x\in U$, the evaluation functional $\delta_x\colon\H^\infty_v(U)\to\mathbb{C}$, defined by $\delta_x(f)=f(x)$ for $f\in\H^\infty_v(U)$, is in $\G^\infty_v(U)$. \item The mapping $\Delta_v\colon U\to\G_v^\infty(U)$ given by $\Delta_v(x)=\delta_x$ is in $\H_v^\infty(U,\G_v^\infty(U))$ with $\left\|\Delta_v\right\|_v\leq 1$.
\item $B_{\G^\infty_v(U)}=\overline{\abco}(\At_{\G^\infty_v(U)})\subseteq\H^\infty_v(U)^*$ and $\G_v^\infty(U)=\overline{\lin}(\At_{\G^\infty_v(U)})\subseteq\H_v^\infty(U)^*$, where $\At_{\G^\infty_v(U)}=\{v(x)\delta_x\colon x\in U\}$.
\item For each $\phi\in\lin(\At_{\G^\infty_v(U)})$, we have 
$$
\left\|\phi\right\|=\inf\left\{\sum_{i=1}^n\left|\lambda_i\right|\colon \phi=\sum_{i=1}^n\lambda_iv(x_i)\delta_{x_i}\right\}.
$$
\item For every complex Banach space $F$ and every mapping $f\in\H_v^\infty(U,F)$, there exists a unique operator $T_f\in\L(\G_v^\infty(U),F)$ such that $T_f\circ\Delta_v=f$. Furthermore, $\left\|T_f\right\|=\left\|f\right\|_v$. 
\item  For each $f\in\H^\infty_v(U,F)$, the mapping $f^t\colon F^*\to\H^\infty_v(U)$, defined by $f^t(y^*)=y^*\circ f$ for all $y^*\in F^*$, is in $\L(F^*,\H^\infty_v(U))$ with $||f^t||=\left\|f\right\|_v$ and $f^t=J_v^{-1}\circ(T_f)^*$, where $(T_f)^*\colon F^*\to\G^\infty_v(U)^*$ is the adjoint operator of $T_f$.
$\hfill\qed$
\end{enumerate}
\end{theorem}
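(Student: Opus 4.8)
This statement gathers the standard structural properties of the linearization (or \emph{predual}) space $\Gv^\infty(U)$ of $\Hv^\infty(U)$, and is essentially what is proved, in one form or another, in \cite{BieSum-93,BonFri-02,GupBaw-16,Muj-91}. The plan is to obtain $(i)$ from the Dixmier--Ng theorem on preduals and to deduce the rest by soft arguments; the core is $(i)$, and once it is available everything else is essentially formal.

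For $(i)$: $\Gv^\infty(U)$ is norm-closed in $\Hv^\infty(U)^*$ almost by definition (a norm-limit of functionals that are continuous on $B_{\Hv^\infty(U)}$ for the compact-open topology $\tau_0$ retains this property). It is a \emph{norming} subspace, since $\|v(x)\delta_x\|\le 1$ and $\|f\|_v=\sup_{x\in U}v(x)|f(x)|=\sup_{x\in U}|\langle v(x)\delta_x,f\rangle|$, so even $\At_{\Gv^\infty(U)}$ norms $\Hv^\infty(U)$. Finally $B_{\Hv^\infty(U)}$ is $\tau_0$-compact by a normal-families (Montel-type) argument, and on it $\tau_0$ agrees with $\sigma(\Hv^\infty(U),\Gv^\infty(U))$; the Dixmier--Ng theorem (cf.\ \cite{Muj-91}) then gives that $J_v$ is an isometric isomorphism of $\Hv^\infty(U)$ onto $\Gv^\infty(U)^*$.

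Granting $(i)$: $(ii)$ is immediate because $\tau_0$-convergence implies pointwise convergence. For $(iii)$, $v(x)\|\delta_x\|=\sup\{v(x)|f(x)|:\|f\|_v\le 1\}\le 1$ gives $\|\Delta_v\|_v\le 1$, and $\Delta_v$ is holomorphic since $J_v(f)\circ\Delta_v=f$ is holomorphic for every $f\in\Hv^\infty(U)$, these functionals exhaust $\Gv^\infty(U)^*$ by $(i)$, and a (locally bounded) weakly holomorphic map into a Banach space is holomorphic. For $(iv)$: a nonzero $f\in\Hv^\infty(U)$ annihilating $\At_{\Gv^\infty(U)}$ would satisfy $v(x)f(x)=0$ for all $x$, hence $f=0$, so $\overline{\lin}(\At_{\Gv^\infty(U)})=\Gv^\infty(U)$ by Hahn--Banach; likewise, any $\phi_0\in B_{\Gv^\infty(U)}\setminus\overline{\abco}(\At_{\Gv^\infty(U)})$ could be separated by some $\Phi\in\Gv^\infty(U)^*=\Hv^\infty(U)$ with $|\langle\phi_0,\Phi\rangle|>\sup_{x\in U}|\langle v(x)\delta_x,\Phi\rangle|=\|\Phi\|_v\ge\|\phi_0\|\,\|\Phi\|_v\ge|\langle\phi_0,\Phi\rangle|$, which is absurd, so $B_{\Gv^\infty(U)}=\overline{\abco}(\At_{\Gv^\infty(U)})$. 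For $(v)$: the operator $q\colon\ell_1(U)\to\Gv^\infty(U)$, $q((\lambda_x)_x)=\sum_{x\in U}\lambda_x v(x)\delta_x$, is well defined with $\|q\|\le 1$, and under the identification of $(i)$ its adjoint is the isometric embedding $q^*\colon\Hv^\infty(U)\to\ell_\infty(U)$, $q^*(f)=(v(x)f(x))_{x\in U}$; hence $q$ is a metric surjection, so the norm on $\Gv^\infty(U)$ is the quotient norm from $\ell_1(U)$, which is exactly the infimum formula of $(v)$.

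It remains to prove $(vi)$ and $(vii)$. Given $f\in\Hv^\infty(U,F)$, put $T_f(\sum_i\lambda_i v(x_i)\delta_{x_i})=\sum_i\lambda_i v(x_i)f(x_i)$ on $\lin(\At_{\Gv^\infty(U)})$; this is well defined and bounded with $\|T_f\|\le\|f\|_v$, because for $y^*\in B_{F^*}$ one has $y^*\circ f\in\Hv^\infty(U)$ with $\|y^*\circ f\|_v\le\|f\|_v$, whence $|\langle\sum_i\lambda_i v(x_i)f(x_i),y^*\rangle|=|\langle\sum_i\lambda_i v(x_i)\delta_{x_i},y^*\circ f\rangle|\le\|f\|_v\,\|\sum_i\lambda_i v(x_i)\delta_{x_i}\|$. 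By $(iv)$ it extends uniquely to $T_f\in\L(\Gv^\infty(U),F)$, and $T_f\circ\Delta_v=f$ together with $\|\Delta_v\|_v\le 1$ forces $\|T_f\|=\|f\|_v$. For $(vii)$: by $(i)$, $(T_f)^*(y^*)=J_v(h)$ for a unique $h\in\Hv^\infty(U)$, and $h(x)=\langle\delta_x,h\rangle=(T_f)^*(y^*)(\delta_x)=y^*(T_f(\delta_x))=y^*(f(x))$ shows $h=y^*\circ f$; thus $f^t=J_v^{-1}\circ(T_f)^*\in\L(F^*,\Hv^\infty(U))$ and $\|f^t\|=\|(T_f)^*\|=\|T_f\|=\|f\|_v$. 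The main obstacle is $(i)$ -- verifying the Dixmier--Ng hypotheses, i.e.\ the $\tau_0$-compactness of $B_{\Hv^\infty(U)}$ and the coincidence of $\tau_0$ with $\sigma(\Hv^\infty(U),\Gv^\infty(U))$ on it; the rest is routine bookkeeping.
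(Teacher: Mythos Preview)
The paper does not supply its own proof of this theorem: it is presented as a compilation of known results from \cite{BieSum-93,BonFri-02,GupBaw-16,Muj-91} and closed with a bare $\qed$. Your sketch follows exactly the standard route of those references---the Dixmier--Ng argument for $(i)$ is Mujica's mechanism in \cite{Muj-91}, carried to the weighted setting in \cite{BieSum-93,GupBaw-16}---and items $(ii)$--$(iv)$, $(vi)$, $(vii)$ are handled correctly by the soft arguments you give.

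One step deserves more care. In $(v)$, from $q^*$ being an isometric embedding you correctly deduce (via an open-mapping argument, using completeness of $\ell_1(U)$) that $q$ is a metric surjection, so that $\|\phi\|=\inf\{\|\lambda\|_1:\lambda\in\ell_1(U),\ q(\lambda)=\phi\}$. But the statement of $(v)$ asks for the infimum over \emph{finitely supported} representations, and truncating an $\ell_1$-representation $\lambda$ to a finite set changes $q(\lambda)$ in general, so your last sentence ``which is exactly the infimum formula of $(v)$'' skips a nontrivial step. Indeed, when the evaluation functionals $\delta_x$ are linearly independent in $\Hv^\infty(U)^*$ (the generic situation), the only finite representations of $v(x_0)\delta_{x_0}$ give $\sum_i|\lambda_i|\ge 1$, whereas $\|v(x_0)\delta_{x_0}\|=v(x_0)\,\|\delta_{x_0}\|$ may be strictly smaller than $1$ if $v$ differs from its associated weight $\tilde v$. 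So either an additional hypothesis on $v$ is implicit here, or a separate argument is needed; this is a subtlety of the statement itself rather than of your overall method.
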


For $v=1_U$ where $1_U(x)=1$ for all $x\in U$, it is usual to write $\H^\infty(U,F)$ (the Banach space of all bounded holomorphic mappings from $U$ into $F$, under the supremum norm) instead of $\H^\infty_v(U,F)$, $\H^\infty(U)$ rather than $\H^\infty(U,\C)$ and, following Mujica's notation in \cite{Muj-91}, $\G^\infty(U)$ instead of $\G^\infty_v(U)$.

\begin{proposition}\label{p3}
Let $[\I,\left\|\cdot\right\|_\I]$ be an operator ideal. Then 
$$
[(\I\circ\Hv^\infty)^{inj},\|\cdot\|_{(\I\circ\Hv^\infty)^{inj}}]
=[\I^{inj}\circ\Hv^\infty,\|\cdot\|_{\I^{inj}\circ\Hv^\infty}].
$$
In particular, the weighted holomorphic ideal $[\I^{inj}\circ\Hv^\infty,\|\cdot\|_{\I^{inj}\circ\Hv^\infty}]$ is injective.
\end{proposition}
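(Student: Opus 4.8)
The plan is to reduce the statement to the linearization of weighted holomorphic mappings (Theorem \ref{t0}(vi)) together with the classical description of the injective hull of an operator ideal. The crucial auxiliary fact is the following: for every operator ideal $[\I,\|\cdot\|_\I]$, a map $f\in\Hv^\infty(U,F)$ lies in $\I\circ\Hv^\infty(U,F)$ if and only if its linearization $T_f\in\L(\Gv^\infty(U),F)$, characterized by $T_f\circ\Delta_v=f$, lies in $\I(\Gv^\infty(U),F)$; and in that case $\|f\|_{\I\circ\Hv^\infty}=\|T_f\|_\I$. To see this: if $f=T\circ g$ with $T\in\I(G,F)$ and $g\in\Hv^\infty(U,G)$, then $g=T_g\circ\Delta_v$ gives $f=(T\circ T_g)\circ\Delta_v$, so $T_f=T\circ T_g$ by uniqueness, whence $T_f\in\I(\Gv^\infty(U),F)$ with $\|T_f\|_\I\leq\|T\|_\I\|T_g\|=\|T\|_\I\|g\|_v$ by the ideal property of $\I$; taking the infimum over all factorizations yields $\|T_f\|_\I\leq\|f\|_{\I\circ\Hv^\infty}$. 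Conversely, if $T_f\in\I(\Gv^\infty(U),F)$ then $f=T_f\circ\Delta_v$ is itself an admissible factorization, and since $\|\Delta_v\|_v\leq 1$ by Theorem \ref{t0}(iii) we get $f\in\I\circ\Hv^\infty(U,F)$ with $\|f\|_{\I\circ\Hv^\infty}\leq\|T_f\|_\I$.

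Next I would record that linearization is compatible with post-composition: for any $S\in\L(F,H)$ one has $T_{S\circ f}=S\circ T_f$, again by the uniqueness in Theorem \ref{t0}(vi), since $(S\circ T_f)\circ\Delta_v=S\circ f$. In particular $T_{\iota_F\circ f}=\iota_F\circ T_f$, where $\iota_F\colon F\to\ell_\infty(B_{F^*})$ is the canonical isometric embedding.

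With these two observations the argument becomes a diagram chase. By Proposition \ref{p1}, $f\in(\I\circ\Hv^\infty)^{inj}(U,F)$ if and only if $\iota_F\circ f\in\I\circ\Hv^\infty(U,\ell_\infty(B_{F^*}))$; by the auxiliary fact this is equivalent to $\iota_F\circ T_f=T_{\iota_F\circ f}\in\I(\Gv^\infty(U),\ell_\infty(B_{F^*}))$, which by the very definition of $\I^{inj}$ means $T_f\in\I^{inj}(\Gv^\infty(U),F)$; and, applying the auxiliary fact now to the ideal $\I^{inj}$, this is in turn equivalent to $f\in\I^{inj}\circ\Hv^\infty(U,F)$. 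The equality of norms is obtained by chaining the corresponding identities:
$$
\|f\|_{(\I\circ\Hv^\infty)^{inj}}=\|\iota_F\circ f\|_{\I\circ\Hv^\infty}=\|\iota_F\circ T_f\|_\I=\|T_f\|_{\I^{inj}}=\|f\|_{\I^{inj}\circ\Hv^\infty}.
$$
Finally, the ``in particular'' clause is immediate, since the injective hull of a normed (Banach) weighted holomorphic ideal is injective by Proposition \ref{p1}, so $[\I^{inj}\circ\Hv^\infty,\|\cdot\|_{\I^{inj}\circ\Hv^\infty}]=[(\I\circ\Hv^\infty)^{inj},\|\cdot\|_{(\I\circ\Hv^\infty)^{inj}}]$ is injective.

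I do not expect a serious obstacle. The only point demanding care is the auxiliary identification, where one must track the infimum defining $\|\cdot\|_{\I\circ\Hv^\infty}$ and use the normalization $\|\Delta_v\|_v\leq 1$; everything afterwards is formal. One should also note in passing that $\Gv^\infty(U)$ is a Banach space — a closed subspace of $\Hv^\infty(U)^*$ by Theorem \ref{t0}(i) — so that the operator-ideal machinery, in particular the formation of $\I^{inj}$, applies to it without restriction.
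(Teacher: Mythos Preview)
Your proof is correct and follows essentially the same route as the paper: both arguments reduce the question to the linearization $T_f$ and the definition of $\I^{inj}$ via the canonical embedding $\iota_F$. Your version is slightly more streamlined in that you isolate the identification $f\in\I\circ\Hv^\infty\Leftrightarrow T_f\in\I$ with $\|f\|_{\I\circ\Hv^\infty}=\|T_f\|_\I$ as a separate lemma (this is \cite[Theorem 2.7]{CabJimKet-24}, which the paper cites elsewhere) and then run a symmetric chain of equivalences, whereas the paper argues the two inclusions separately and in the direction $\I^{inj}\circ\Hv^\infty\subseteq(\I\circ\Hv^\infty)^{inj}$ works directly with an arbitrary factorization $f=T\circ g$ rather than through $T_f$.
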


\begin{proof}
Let $F$ be a complex Banach space and $f\in(\I\circ\Hv^\infty)^{inj}(U,F)$. Hence $\iota_F\circ f\in\I\circ\Hv^\infty(U,\ell_\infty(B_{F^*}))$, and so $\iota_F\circ f=T\circ g$ for some complex Banach space $G$, an operator $T\in\I(G,\ell_\infty(B_{F^*}))$ and a map $g\in\Hv^\infty(U,G)$. By Theorem \ref{t0}, we can find two operators $T_f\in\L(\G^\infty_v(U),F)$ and $T_g\in\L(\G^\infty_v(U),G)$ with $\|T_f\| = \|f\|_v$ and $\|T_g\|=\|g\|$ such that $T_f\circ\Delta_v= f$ and $T_g\circ\Delta_v= g$. Since $\G^\infty_v(U)=\overline{\lin}(\Delta_v(U))\subseteq\Hv^\infty(U)^*$ and 
$$
\iota_F\circ T_f\circ\Delta_v=\iota_F\circ  f=T\circ  g=T\circ T_g\circ\Delta_v,
$$
it follows that $\iota_F\circ T_f=T\circ T_g$, and thus $\iota_F\circ T_f\in\I(\G^\infty_v(U),\ell_\infty(B_{F^*}))$, that is, $T_f\in \I^{inj}(\G^\infty_v(U),F)$. Hence $ f=T_f\circ\Delta_v\in\I^{inj}\circ\Hv^\infty(U,F)$. Moreover, 
$$
\left\|f\right\|_{\I^{inj}\circ\Hv^\infty}\leq\left\|T_f\right\|_{\I^{inj}}\|\Delta_v\|\leq \left\|T_f\right\|_{\I^{inj}}=\left\|\iota_F\circ T_f\right\|_{\I}\leq\left\|T\right\|_\I\left\|T_g\right\|=\left\|T\right\|_\I\|g\|_v,
$$
and passing to the infimum over all the factorizations of $\iota_F\circ f$ yields $\left\|f\right\|_{\I^{inj}\circ\Hv^\infty}\leq \left\|\iota_F\circ f\right\|_{\I\circ\Hv^\infty}=\left\|f\right\|_{(\I\circ\Hv^\infty)^{inj}}$.

Conversely, let $f\in\I^{inj}\circ\Hv^\infty(U,F)$. Hence $f=T\circ g$ for some complex Banach space $G$, $T\in\I^{inj}(G,F)$ and $g\in\Hv^\infty(U,G)$. Therefore $\iota_F\circ f=(\iota_F\circ T)\circ g\in\I\circ\Hv^\infty(U,\ell_\infty(B_{F^*})$, and thus $f\in(\I\circ\Hv^\infty)^{inj}(U,F)$ with 
$$
\left\|f\right\|_{(\I\circ\Hv^\infty)^{inj}}=\left\|\iota_F\circ f\right\|_{\I\circ\Hv^\infty}=\left\|\iota_F\circ T\circ g\right\|_{\I\circ\Hv^\infty}
\leq\left\|\iota_F\circ T\right\|_{\I}\|g\|_v=\left\|T\right\|_{\I^{inj}}\|g\|_v.
$$
Taking the infimum over all the factorizations of $f$, we conclude that $\left\|f\right\|_{(\I\circ\Hv^\infty)^{inj}}\leq \left\|f\right\|_{\I^{inj}\circ\Hv^\infty}$.
\end{proof}

From Proposition \ref{p3} and Corollary \ref{c1}, we deduce the following.

\begin{corollary}\label{c4}
Let $[\I,\left\|\cdot\right\|_\I]$ be an injective normed operator ideal. Then $[\I\circ\Hv^\infty,\left\|\cdot\right\|_{\I\circ\Hv^\infty}]$ is an injective weighted holomorphic ideal.$\hfill$ $\qed$
\end{corollary}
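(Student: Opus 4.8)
The plan is to obtain the corollary as a formal consequence of Proposition \ref{p3} together with the injectivity criterion of Corollary \ref{c1}. The one fact from the linear theory that I would record first is that an injective normed operator ideal coincides, isometrically, with its own injective hull: $[\I^{inj},\|\cdot\|_{\I^{inj}}]=[\I,\|\cdot\|_\I]$ whenever $[\I,\|\cdot\|_\I]$ is injective. This is the linear counterpart of Corollary \ref{c1} (see \cite[Proposition 19.2.2]{Jar-81}); it is also immediate from the very definition of $\I^{inj}$ as the smallest injective operator ideal containing $\I$, since in that case $\I$ itself is such an ideal, while for the norms one uses that $\iota_F\colon F\to\ell_\infty(B_{F^*})$ is an isometric embedding and that injectivity of $[\I,\|\cdot\|_\I]$ forces $\|T\|_\I=\|\iota_F\circ T\|_\I=\|T\|_{\I^{inj}}$.

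With this at hand the argument is a short chain of equalities of normed weighted holomorphic ideals. By Proposition \ref{p3},
$$
[(\I\circ\Hv^\infty)^{inj},\|\cdot\|_{(\I\circ\Hv^\infty)^{inj}}]
=[\I^{inj}\circ\Hv^\infty,\|\cdot\|_{\I^{inj}\circ\Hv^\infty}],
$$
and substituting $\I^{inj}=\I$ the right-hand side is exactly $[\I\circ\Hv^\infty,\|\cdot\|_{\I\circ\Hv^\infty}]$. Hence $[\I\circ\Hv^\infty,\|\cdot\|_{\I\circ\Hv^\infty}]$ equals its own injective hull, and the implication $(ii)\Rightarrow(i)$ of Corollary \ref{c1} gives that it is an injective weighted holomorphic ideal, which is the assertion.

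I do not anticipate any genuine difficulty: the statement is designed to be read off from the two preceding results, and the only ingredient not already in the excerpt is the elementary linear identity $\I^{inj}=\I$ for injective $\I$. Should one prefer a more self-contained path, the equality of the underlying sets can be checked directly from the description $(\I\circ\Hv^\infty)^{inj}(U,F)=\{f\in\Hv^\infty(U,F):\iota_F\circ f\in\I\circ\Hv^\infty(U,\ell_\infty(B_{F^*}))\}$ together with the injectivity of $\I$, but the route through Proposition \ref{p3} is shorter and is the one I would write up.
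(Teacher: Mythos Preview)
Your proposal is correct and follows essentially the same approach as the paper: the authors simply state that the corollary is deduced from Proposition \ref{p3} and Corollary \ref{c1}, which is exactly the chain of equalities you spell out (using $\I^{inj}=\I$ for injective $\I$).
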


For $\I=\F,\overline{\F},\K,\W,\S,\R,\AS$ and Banach spaces $E,F$, we will denote by $\I(E,F)$ the linear space of all finite-rank (approximable, compact, weakly compact, separable, Rosenthal, Asplund) bounded linear operators from $E$ to $F$, respectively. The components $\I(E,F)$, equipped with the operator canonical norm $\left\|\cdot\right\|$, generate a normed operator ideal (see \cite{Pie-80}).

For a map $f\in\H(U,F)$, the v-range of $f$ is the set  
$$
(vf)(U)=\left\{v(x)f(x)\colon x\in U\right\}\subseteq F.
$$
Note that $f$ belongs to $\Hv^\infty(U,F)$ if and only if $(vf)(U)$ is a norm-bounded subset of $F$. This motivates the following concepts.

\begin{definition}
Let $U$ be an open set of a complex Banach space $E$, let $v$ be a weight on $U$ and let $F$ be a complex Banach space. 

A mapping $f\in\Hv^\infty(U,F)$ is said to be v-compact (resp., v-weakly compact, v-separable, v-Rosenthal, v-Asplund) if $(vf)(U)$ is a relatively compact (resp., relatively weakly compact, separable, Rosenthal, Asplund) subset of $F$. 

A mapping $f\in\Hv^\infty(U,F)$ is said to have finite dimensional v-rank if  $(vf)(U)$ is a finite dimensional subspace of $F$, and $f$ is said to be v-approximable if it is the limit in the v-norm of a sequence of finite v-rank weighted holomorphic mappings of $\Hv^\infty(U,F)$. 

For $\I=\F,\overline{\F},\K,\W,\S,\R,\AS$, $\H_{v\I}^\infty(U,F)$ stand for the linear space of all finite v-rank (resp., v-approximable, v-compact, v-weakly compact, v-separable, v-Rosenthal, v-Asplund) weighted holomorphic mappings from $U$ into $F$.
\end{definition}

The same proofs of Theorem 2.9 and Corollary 2.10 in \cite{CabJimKet-24} yield the following two results. 

\begin{theorem}\label{linear}
Let $f\in\Hv^\infty(U,F)$ and $\I=\F,\overline{\F},\K,\W,\S,\R,\AS$. For the normed operator ideal $[\I,\left\|\cdot\right\|_\I]$, the following are equivalent:
\begin{enumerate}
\item $f$ belongs to $\H_{v\I}^\infty(U,F)$.
\item $T_f$ belongs to $\I(\Gv^\infty(U),F)$.
\end{enumerate}
In this case, $\left\|f\right\|_v=\left\|T_f\right\|_I$. Furthermore, the correspondence $f\mapsto T_f$ is an isometric isomorphism from $(\H_{v\I}^\infty(U,F),\left\|\cdot\right\|_v)$ onto $(\I(\Gv^\infty(U),F),\left\|\cdot\right\|_I)$. $\hfill\Box$
\end{theorem}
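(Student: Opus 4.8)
The plan is to transport the whole statement to the linearization $\Gv^\infty(U)$ through the isometric bijection $f\mapsto T_f$ of Theorem~\ref{t0}$(vi)$, and to reduce the equivalence $(i)\Leftrightarrow(ii)$ to an assertion about the v-range $(vf)(U)$ and the image $T_f(B_{\Gv^\infty(U)})$. The elementary observation that makes this work is the identity
$$
(vf)(U)=\bigl\{v(x)T_f(\delta_x):x\in U\bigr\}=T_f\bigl(\At_{\Gv^\infty(U)}\bigr),
$$
which holds because $T_f\circ\Delta_v=f$ and $\At_{\Gv^\infty(U)}=\{v(x)\delta_x:x\in U\}$. Combining it with the descriptions $B_{\Gv^\infty(U)}=\overline{\abco}(\At_{\Gv^\infty(U)})$ and $\Gv^\infty(U)=\overline{\lin}(\At_{\Gv^\infty(U)})$ from Theorem~\ref{t0}$(iv)$, and using only that $T_f$ is linear and continuous, I would first record
$$
\overline{T_f\bigl(B_{\Gv^\infty(U)}\bigr)}=\overline{\abco}\bigl((vf)(U)\bigr),\qquad
\overline{T_f\bigl(\Gv^\infty(U)\bigr)}=\overline{\lin}\bigl((vf)(U)\bigr),
$$
the verification being the two inclusions $T_f(\abco A)=\abco(T_f A)\subseteq T_f(\overline{\abco}A)$ and $T_f(\overline{\abco}A)\subseteq\overline{\abco}(T_f A)$, followed by taking closures (and likewise with $\lin$ in place of $\abco$); note that $T_f(B_{\Gv^\infty(U)})$ itself need not be closed, so one always passes to closures, which is harmless for the properties at hand.

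Next, for each ideal $\I=\F,\overline{\F},\K,\W,\S,\R,\AS$ I would combine these identities with the standard description of the corresponding operator class in terms of the image of the unit ball, together with the stability of the associated class of sets under closed absolutely convex hulls and under subsets: $T_f\in\K$ iff $\overline{T_f(B_{\Gv^\infty(U)})}$ is compact, and the closed absolutely convex hull of a relatively compact set is compact (Mazur); $T_f\in\W$ iff that closure is weakly compact, which by the Krein--\v{S}mulian theorem happens exactly when the generating set is relatively weakly compact; $T_f\in\S$ iff $\overline{T_f(\Gv^\infty(U))}$ is separable, and a set is separable iff its closed linear span is; $T_f\in\R$ (resp.\ $\AS$) iff $T_f(B_{\Gv^\infty(U)})$ is a Rosenthal (resp.\ Asplund) set, and both classes are stable under closed absolutely convex hulls and subsets; and $T_f\in\F$ iff $\overline{T_f(\Gv^\infty(U))}$ is finite dimensional. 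In each case the two identities above turn the condition on $T_f$ into precisely the condition that $(vf)(U)$ be relatively compact, relatively weakly compact, separable, Rosenthal, Asplund, or contained in a finite dimensional subspace, i.e.\ into $f\in\H_{v\I}^\infty(U,F)$. The case $\I=\overline{\F}$ is then immediate from the finite-rank case: since $\|T_f\|=\|f\|_v$ and $f\mapsto T_f$ is linear, a sequence of finite v-rank maps converging to $f$ in $\|\cdot\|_v$ corresponds to a sequence of finite rank operators converging to $T_f$ in the operator norm, and conversely.

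For the norm equality and the last assertion, observe that on all the components in question $\|\cdot\|_\I$ is the operator norm, so $\|f\|_v=\|T_f\|=\|T_f\|_\I$ by Theorem~\ref{t0}$(vi)$; together with $(i)\Leftrightarrow(ii)$ and the linearity and bijectivity of $f\mapsto T_f$ this shows that $f\mapsto T_f$ restricts to an isometric isomorphism of $(\H_{v\I}^\infty(U,F),\|\cdot\|_v)$ onto $(\I(\Gv^\infty(U),F),\|\cdot\|_\I)$, surjectivity holding because any $T\in\I(\Gv^\infty(U),F)\subseteq\L(\Gv^\infty(U),F)$ equals $T_f$ for $f:=T\circ\Delta_v\in\Hv^\infty(U,F)$, which then lies in $\H_{v\I}^\infty(U,F)$ by $(ii)\Rightarrow(i)$. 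The only non-routine point, and the step to watch, is the bank of set-stability facts: compactness and weak compactness are classical (Mazur, Krein--\v{S}mulian), separability and finite dimensionality are trivial, whereas ``the closed absolutely convex hull of a Rosenthal (Asplund) set is again Rosenthal (Asplund)'' are the genuine inputs; these are exactly the facts underlying Theorem~2.9 in \cite{CabJimKet-24}, so its proof transfers verbatim.
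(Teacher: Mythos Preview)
Your proposal is correct and follows essentially the same route as the paper: the paper does not supply a proof but refers to the proofs of Theorem~2.9 and Corollary~2.10 in \cite{CabJimKet-24}, whose argument is precisely the chain $(vf)(U)=T_f(\At_{\Gv^\infty(U)})\subseteq T_f(B_{\Gv^\infty(U)})\subseteq\overline{\abco}((vf)(U))$ together with the set-stability facts for each ideal class that you invoke. Your handling of $\overline{\F}$ via the isometry $f\mapsto T_f$ and of the final isometric-isomorphism assertion is also in line with the intended argument.
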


\begin{corollary}\label{proposition: Banach ideal}
$[\H_{v\I}^\infty,\left\|\cdot\right\|_v]=[\I\circ\H_v^\infty,\left\|\cdot\right\|_{\I\circ\H_v^\infty}]$ for $\I=\F,\overline{\F},\K,\W,\S,\R,\AS$. As a consequence, 
\begin{enumerate}
\item $[\H_{v\I}^\infty,\left\|\cdot\right\|_v]$ is a Banach weighted holomorphic ideal for $\I=\overline{\F},\K,\W,\S,\R,\AS$,
\item $[\H_{v\F}^\infty,\left\|\cdot\right\|_v]$ is a normed weighted holomorphic ideal. 
\end{enumerate}
$\hfill\qed$
\end{corollary}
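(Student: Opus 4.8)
The plan is to deduce this corollary directly from the identification in Theorem~\ref{linear} together with the factorization description of composition ideals and the known linear facts about the operator ideals $\F,\overline{\F},\K,\W,\S,\R,\AS$. The core equality $[\H_{v\I}^\infty,\left\|\cdot\right\|_v]=[\I\circ\H_v^\infty,\left\|\cdot\right\|_{\I\circ\H_v^\infty}]$ should follow by a chain of equivalences: for $f\in\Hv^\infty(U,F)$, Theorem~\ref{linear} gives $f\in\H_{v\I}^\infty(U,F)\iff T_f\in\I(\Gv^\infty(U),F)$, and $T_f\in\I(\Gv^\infty(U),F)\iff f=T_f\circ\Delta_v\in\I\circ\Hv^\infty(U,F)$ — the forward direction here is immediate from the factorization $f=T_f\circ\Delta_v$ with $\Delta_v\in\Hv^\infty(U,\Gv^\infty(U))$ and $\|\Delta_v\|_v\le 1$ (Theorem~\ref{t0}(iii)), and the backward direction uses that if $f=T\circ g$ with $T\in\I(G,F)$, $g\in\Hv^\infty(U,G)$, then $T_f=T\circ T_g$ (since $T_f\circ\Delta_v=f=T\circ g=T\circ T_g\circ\Delta_v$ and $\Gv^\infty(U)=\overline{\lin}(\Delta_v(U))$), whence $T_f\in\I(\Gv^\infty(U),F)$ by the ideal property of $\I$. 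For the norms: Theorem~\ref{linear} gives $\|f\|_v=\|T_f\|_\I$; one inequality is $\|f\|_{\I\circ\Hv^\infty}\le\|T_f\|_\I\|\Delta_v\|_v\le\|T_f\|_\I=\|f\|_v$, and the reverse is $\|f\|_v=\|T_f\|_\I\le\|T\|_\I\|T_g\|=\|T\|_\I\|g\|_v$ for every factorization $f=T\circ g$, so $\|f\|_v\le\|f\|_{\I\circ\Hv^\infty}$ after passing to the infimum.

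Once the equality of ideals is established, statements~$(i)$ and~$(ii)$ are read off from the corresponding linear facts. For $\I=\overline{\F},\K,\W,\S,\R,\AS$ the operator ideal $[\I,\|\cdot\|]$ is a \emph{Banach} operator ideal (the component $\I(E,F)$ is closed in $\L(E,F)$, hence complete), so by Theorem~\ref{linear} the space $(\H_{v\I}^\infty(U,F),\|\cdot\|_v)$ is isometrically isomorphic to the Banach space $(\I(\Gv^\infty(U),F),\|\cdot\|)$ and is therefore itself a Banach space; combined with the ideal properties (P1)--(P3) inherited from the composition-ideal structure (Corollary~2.10 in \cite{CabJimKet-24}, or equivalently \cite[Proposition 2.6]{CabJimKet-24} applied to $[\I,\|\cdot\|]$), this yields that $[\H_{v\I}^\infty,\|\cdot\|_v]$ is a Banach weighted holomorphic ideal. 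For $\I=\F$, the finite-rank operators form only a normed (not Banach) operator ideal, so one gets a normed weighted holomorphic ideal, giving~$(ii)$.

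The bulk of this is routine bookkeeping; the only genuine point requiring care is the well-definedness and identity $T_f=T\circ T_g$ in the backward direction of the ideal-equality, which rests on the density $\Gv^\infty(U)=\overline{\lin}(\Delta_v(U))$ from Theorem~\ref{t0}(iv) and the uniqueness of the linearization in Theorem~\ref{t0}(vi) — but since the statement explicitly says ``the same proofs of Theorem~2.9 and Corollary~2.10 in \cite{CabJimKet-24}'' apply, I would simply cite that argument and not reproduce it. The one thing worth a sentence is noting that the classes $\F,\overline{\F},\K,\W,\S,\R,\AS$ do indeed all satisfy the hypotheses needed for Theorem~\ref{linear} (closedness under the relevant surjective-hull/range considerations), which is standard and recorded in \cite{Pie-80}.
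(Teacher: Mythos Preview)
Your proposal is correct and follows essentially the same approach as the paper, which gives no proof beyond the remark that ``the same proofs of Theorem~2.9 and Corollary~2.10 in \cite{CabJimKet-24}'' apply; you have simply unpacked what that citation amounts to --- the linearization $f=T_f\circ\Delta_v$, the identity $T_f=T\circ T_g$ via density of $\lin(\Delta_v(U))$, and the infimum argument for the norm --- together with the standard fact that $[\I,\|\cdot\|]$ is a Banach (resp.\ normed) operator ideal for the listed classes.
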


We are in a position to establish the injectivity of these ideals.

\begin{corollary}\label{c5}
For $\I=\F,\K,\W,\S,\R,\AS$, the weighted holomorphic ideal $[\H_{v\I}^\infty,\|\cdot\|_v]$ is injective.
\end{corollary}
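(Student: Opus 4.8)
The plan is to reduce the statement to the injectivity of the underlying operator ideals and then let the machinery already developed do the rest. First I would invoke Corollary~\ref{proposition: Banach ideal}, which identifies $[\H_{v\I}^\infty,\|\cdot\|_v]$ with the composition ideal $[\I\circ\H_v^\infty,\|\cdot\|_{\I\circ\H_v^\infty}]$ for $\I=\F,\K,\W,\S,\R,\AS$. By Corollary~\ref{c4}, the composition ideal $[\I\circ\Hv^\infty,\|\cdot\|_{\I\circ\Hv^\infty}]$ is an injective weighted holomorphic ideal as soon as $[\I,\|\cdot\|_\I]$ is an injective normed operator ideal (the case $\I=\F$ is admissible, since Corollary~\ref{c4} only requires a normed, not necessarily Banach, operator ideal). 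Thus the whole statement collapses to checking that each of $\F,\K,\W,\S,\R,\AS$, equipped with the operator norm, is an injective operator ideal.

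For $\I=\K,\W,\S,\R,\AS$ this is classical (see \cite{Pie-80}); concretely, it rests on the elementary remark that if $\iota\colon F\to G$ is an isometric linear embedding and $A\subseteq F$ is bounded, then $A$ is relatively compact (resp.\ relatively weakly compact, separable, Rosenthal, Asplund) in $F$ precisely when $\iota(A)$ has the corresponding property in $G$, because $\iota$ is a (weak-to-weak) homeomorphic embedding onto its closed range and these are intrinsic properties of the set $A$. Applying this with $A=T(B_E)$ and with the canonical embedding $\iota_F\colon F\to\ell_\infty(B_{F^*})$ gives $T\in\I(E,F)$ whenever $\iota_F\circ T\in\I(E,\ell_\infty(B_{F^*}))$, and the norm identity $\|T\|=\|\iota_F\circ T\|$ holds since $\iota_F$ is an isometry, so the injectivity is metric. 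For $\I=\F$ the argument is shorter still: any isometric embedding $\iota$ is injective, hence $\mathrm{rank}(\iota\circ T)=\mathrm{rank}(T)$, so $\iota_F\circ T$ of finite rank forces $T$ of finite rank, again with $\|T\|=\|\iota_F\circ T\|$.

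Combining these observations: each $\I\in\{\F,\K,\W,\S,\R,\AS\}$ is an injective normed operator ideal; by Corollary~\ref{c4} the ideal $[\I\circ\Hv^\infty,\|\cdot\|_{\I\circ\Hv^\infty}]$ is injective; and by Corollary~\ref{proposition: Banach ideal} this ideal equals $[\H_{v\I}^\infty,\|\cdot\|_v]$, which is therefore injective. I do not expect a genuine obstacle here, since the substantive work was already carried out in Corollaries~\ref{proposition: Banach ideal} and~\ref{c4}; the only points demanding a little care are the norm bookkeeping (using that $\iota_F$ is an isometry, so that the equality $\|f\|_{\Hv^\infty}=\|\iota\circ f\|_{\Hv^\infty}$ required by injectivity of a \emph{normed} ideal actually holds) and the — routine — verification that the six operator ideals on the list are indeed injective.
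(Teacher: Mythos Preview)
Your proof is correct and follows essentially the same route as the paper: both reduce to Corollary~\ref{proposition: Banach ideal} and the injectivity of the underlying operator ideals, with you invoking Corollary~\ref{c4} directly while the paper unpacks the computation of the injective hull via Proposition~\ref{p3} (from which Corollary~\ref{c4} is derived). The only cosmetic difference is that the paper cites \cite{GonGut-99} for $\I^{inj}=\I$ in these cases, whereas you appeal to \cite{Pie-80} and sketch the verification; either is fine.
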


\begin{proof}
Applying Corollary \ref{proposition: Banach ideal} for the first and fourth equalities, Proposition \ref{p3} for the second, and \cite{GonGut-99} for the third, one has
\begin{align*}
[(\H_{v\I}^\infty)^{inj},(\|\cdot\|_v)^{inj}]
&=[(\I\circ\Hv^\infty)^{inj},\|\cdot\|_{(\I\circ\Hv^\infty)^{inj}}]
=[\I^{inj}\circ\Hv^\infty,\|\cdot\|_{\I^{inj}\circ\Hv^\infty}]\\
&=[\I\circ\Hv^\infty,\|\cdot\|_{\I\circ\Hv^\infty}]
=[\H_{v\I}^\infty,\|\cdot\|_v].
\end{align*}
\end{proof}

We now identify the injective hull of the ideal $\H_{v\overline{\F}}^\infty$.

\begin{corollary}\label{c6}
$[(\H_{v\overline{\F}}^\infty)^{inj},(\|\cdot\|_v)^{inj}]=[\H_{v\K}^\infty,\|\cdot\|_v]$.
\end{corollary}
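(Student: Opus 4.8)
The plan is to reduce the statement entirely to the composition-ideal machinery of Proposition \ref{p3}, combined with the classical linear fact that the injective hull of the normed operator ideal of approximable operators is, isometrically, the normed operator ideal of compact operators. First I would invoke Corollary \ref{proposition: Banach ideal} to rewrite $[\H_{v\overline{\F}}^\infty,\|\cdot\|_v]$ as the composition ideal $[\overline{\F}\circ\Hv^\infty,\|\cdot\|_{\overline{\F}\circ\Hv^\infty}]$ and, in the same way, $[\H_{v\K}^\infty,\|\cdot\|_v]$ as $[\K\circ\Hv^\infty,\|\cdot\|_{\K\circ\Hv^\infty}]$. Since the injective-hull assignment is monotone (Proposition \ref{p2}(ii)), equal normed weighted holomorphic ideals have equal injective hulls, so it suffices to identify $[(\overline{\F}\circ\Hv^\infty)^{inj},\|\cdot\|_{(\overline{\F}\circ\Hv^\infty)^{inj}}]$; by Proposition \ref{p3} this equals $[\overline{\F}^{inj}\circ\Hv^\infty,\|\cdot\|_{\overline{\F}^{inj}\circ\Hv^\infty}]$.

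Thus the whole problem collapses to the linear identity $[\overline{\F}^{inj},\|\cdot\|_{\overline{\F}^{inj}}]=[\K,\|\cdot\|]$. For $T\in\L(E,F)$ one has $T\in\overline{\F}^{inj}(E,F)$ if and only if $\iota_F\circ T\in\overline{\F}(E,\ell_\infty(B_{F^*}))$; since $\ell_\infty(B_{F^*})$ has the metric approximation property, $\overline{\F}(E,\ell_\infty(B_{F^*}))=\K(E,\ell_\infty(B_{F^*}))$ isometrically, and because $\iota_F$ is an isometric linear embedding, $\iota_F\circ T$ is compact precisely when $T$ is. This gives $\overline{\F}^{inj}=\K$ as sets, while $\|T\|_{\overline{\F}^{inj}}=\|\iota_F\circ T\|_{\overline{\F}}=\|\iota_F\circ T\|=\|T\|=\|T\|_\K$. (I would either cite this from \cite{Pie-80} or record this short argument.) Substituting $\overline{\F}^{inj}=\K$ into the composition ideal and applying Corollary \ref{proposition: Banach ideal} once more yields the chain
\begin{align*}
[(\H_{v\overline{\F}}^\infty)^{inj},(\|\cdot\|_v)^{inj}]
&=[(\overline{\F}\circ\Hv^\infty)^{inj},\|\cdot\|_{(\overline{\F}\circ\Hv^\infty)^{inj}}]\\
&=[\overline{\F}^{inj}\circ\Hv^\infty,\|\cdot\|_{\overline{\F}^{inj}\circ\Hv^\infty}]\\
&=[\K\circ\Hv^\infty,\|\cdot\|_{\K\circ\Hv^\infty}]
=[\H_{v\K}^\infty,\|\cdot\|_v],
\end{align*}
which is the claim.

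The only genuinely non-formal ingredient is the isometric linear identity $\overline{\F}^{inj}=\K$, which rests on the metric approximation property of $\ell_\infty(B_{F^*})$ (ensuring that compact operators with range in $\ell_\infty(B_{F^*})$ are approximable); everything else is a formal manipulation of the hull operation and of the composition procedure, both already established in the excerpt. No appeal to Theorem \ref{t1} or to the domination property is needed for this particular identification, although one could alternatively argue directly via Theorem \ref{t1} that a $v$-compact map dominates $\iota_F\circ f$ for each $v$-approximable $f$; the composition route above is shorter.
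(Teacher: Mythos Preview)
Your proof is correct and follows essentially the same chain of equalities as the paper: Corollary \ref{proposition: Banach ideal} for the outer identifications, Proposition \ref{p3} for the middle step, and the linear identity $[(\overline{\F})^{inj},\|\cdot\|_{inj}]=[\K,\|\cdot\|]$, which the paper simply cites as \cite[Proposition 4.6.13]{Pie-80} rather than sketching via the metric approximation property of $\ell_\infty(B_{F^*})$.
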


\begin{proof}
As in the preceding proof, one now has  
\begin{align*}
[(\H_{v\overline{\F}}^\infty)^{inj},(\|\cdot\|_v)^{inj}]
&=[(\overline{\F}\circ\Hv^\infty)^{inj},\|\cdot\|_{(\overline{\F}\circ\Hv^\infty)^{inj}}]
=[(\overline{\F})^{inj}\circ\Hv^\infty,\|\cdot\|_{(\overline{\F})^{inj}\circ\Hv^\infty}]\\
&=[\K\circ\Hv^\infty,\|\cdot\|_{\K\circ\Hv^\infty}]
=[\H_{v\K}^\infty,\|\cdot\|_v]
\end{align*}
by Corollary \ref{proposition: Banach ideal} for the first and fourth equalities, Proposition \ref{p3} for the second, and the equality $[(\overline{\F})^{inj},\left\|\cdot\right\|_{inj}]=[\K,\left\|\cdot\right\|]$ by \cite[Proposition 4.6.13]{Pie-80} for the third.
\end{proof}


\subsection{The injective hull of dual ideals of weighted holomorphic mappings}\label{subsection 4}

Following \cite[Section 4.4]{Pie-80}, given a normed operator ideal $[\I,\left\|\cdot\right\|_\I]$, the components  
$$
\I^\d(E,F):=\left\{T\in\L(E,F)\colon T^*\in\I(F^*,E^*)\right\}
$$
for any normed spaces $E$ and $F$, endowed with the norm 
$$
\left\|T\right\|_{\I^\d}=\left\|T^*\right\|_\I\qquad (T\in\I^\d(E,F)),
$$ 
define a normed operator ideal, $[\I^\d,\left\|\cdot\right\|_{\I^d}]$, called dual ideal of $\I$. Moreover, $[\I,\left\|\cdot\right\|_\I]$ is said to be symmetric and completely symmetric if $[\I,\left\|\cdot\right\|_\I]\leq [\I^\d,\left\|\cdot\right\|_{\I^\d}]$ and $[\I,\left\|\cdot\right\|_\I]=[\I^\d,\left\|\cdot\right\|_{\I^\d}]$, respectively.

Based on the notion of transpose of a weighted holomorphic map (see Theorem \ref{t0}), we introduce the concept of dual weighted holomorphic ideal of an operator ideal $\I$.

\begin{definition}
Let $\I$ be an operator ideal. For any open subset $U$ of a complex Banach space $E$, any weight $v$ on $U$ and any complex Banach space $F$, we define 
$$
\I^{\Hv^\infty\text{-}\d}(U,F)= \{f \in \Hv^\infty(U,F)\colon f^t \in \I(F^*,\Hv^\infty(U))\}
$$
If $[\I,\left\|\cdot\right\|_\I]$ is a normed operator ideal, we set 
$$
\|f\|_{\I^{\Hv^\infty\text{-}\d}}=\|f^t\|_\I\qquad (f \in \I^{\Hv^\infty\text{-}\d}(U,F)).
$$
\end{definition}
 
We now show that $[\I^{\Hv^\infty\text{-}\d},\|\cdot\|_{\I^{\Hv^\infty\text{-}\d}}]$ is in fact a normed weighted holomorphic ideal.

\begin{theorem}\label{theorem: transposition}
Let $\I$ be an operator ideal. The following statements about a mapping $f\in \Hv^\infty(U,F)$ are equivalent:
\begin{enumerate}
\item $f$ belongs to $\I^{\Hv^\infty\text{-}\d}(U,F)$.
\item $f$ belongs to  $\I^\d\circ\Hv^\infty(U,F)$. 
\end{enumerate}
If in addition $(\I,\left\|\cdot\right\|_\I)$ is a normed operator ideal, then 
$$
\left\|f\right\|_{\I^{\Hv^\infty\text{-}\d}}=\left\|f\right\|_{\I^\text{dual}\circ\Hv^\infty}\qquad (f \in \I^{\Hv^\infty\text{-}\d}(U,F)).
$$
\end{theorem}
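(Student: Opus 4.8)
The plan is to establish the equivalence by producing explicit factorizations in both directions, using the linearization machinery of Theorem \ref{t0}, in particular the identity $f^t = J_v^{-1}\circ (T_f)^*$ and the isometry $\|T_f\| = \|f\|_v$. First I would set up notation: given $f\in\Hv^\infty(U,F)$, let $T_f\in\L(\G_v^\infty(U),F)$ be its linearization with $T_f\circ\Delta_v = f$ and $\|T_f\|=\|f\|_v$, and recall that $f^t\in\L(F^*,\Hv^\infty(U))$ with $f^t = J_v^{-1}\circ(T_f)^*$, where $J_v\colon\Hv^\infty(U)\to\G_v^\infty(U)^*$ is the canonical isometric isomorphism. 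The key observation is then that, since $J_v$ is an (isometric) isomorphism, $f^t\in\I(F^*,\Hv^\infty(U))$ if and only if $(T_f)^*\in\I(F^*,\G_v^\infty(U)^*)$, which by the very definition of the dual ideal is equivalent to $T_f\in\I^\d(\G_v^\infty(U),F)$; moreover the norms match: $\|f^t\|_\I = \|(T_f)^*\|_\I = \|T_f\|_{\I^\d}$.

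Next I would use this to prove $(i)\Rightarrow(ii)$. If $f\in\I^{\Hv^\infty\text{-}\d}(U,F)$, then by the above $T_f\in\I^\d(\G_v^\infty(U),F)$, and since $f = T_f\circ\Delta_v$ with $\Delta_v\in\Hv^\infty(U,\G_v^\infty(U))$ (Theorem \ref{t0}(iii), $\|\Delta_v\|_v\le 1$), this is precisely a factorization witnessing $f\in\I^\d\circ\Hv^\infty(U,F)$, with $\|f\|_{\I^\d\circ\Hv^\infty}\le\|T_f\|_{\I^\d}\|\Delta_v\|_v\le\|T_f\|_{\I^\d}=\|f\|_{\I^{\Hv^\infty\text{-}\d}}$. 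For $(ii)\Rightarrow(i)$, suppose $f = T\circ g$ with $G$ a complex Banach space, $T\in\I^\d(G,F)$ and $g\in\Hv^\infty(U,G)$. Then $f^t = g^t\circ T^*$ where $g^t\in\L(G^*,\Hv^\infty(U))$ and $T^*\in\I(F^*,G^*)$, so the ideal property of $\I$ gives $f^t\in\I(F^*,\Hv^\infty(U))$, i.e. $f\in\I^{\Hv^\infty\text{-}\d}(U,F)$, with $\|f\|_{\I^{\Hv^\infty\text{-}\d}} = \|f^t\|_\I = \|g^t\circ T^*\|_\I\le\|g^t\|\,\|T^*\|_\I = \|g\|_v\,\|T\|_{\I^\d}$; taking the infimum over all factorizations yields $\|f\|_{\I^{\Hv^\infty\text{-}\d}}\le\|f\|_{\I^\d\circ\Hv^\infty}$. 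Combining the two inequalities gives the claimed norm equality.

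One technical point to verify carefully is the factorization $f^t = g^t\circ T^*$: for $y^*\in F^*$ one has $f^t(y^*) = y^*\circ f = y^*\circ T\circ g = (T^*y^*)\circ g = g^t(T^*y^*)$, so the identity holds on the nose; this is the only place where the concrete description of the transpose as $y^*\mapsto y^*\circ f$ is used, and it goes through without the linearization. The main obstacle — really the only subtle point — is the clean passage between $f^t\in\I(F^*,\Hv^\infty(U))$ and $(T_f)^*\in\I(F^*,\G_v^\infty(U)^*)$: one must invoke that operator ideals are stable under composition with isomorphisms on both sides (a consequence of the ideal property applied to $J_v$ and $J_v^{-1}$) and that $\|J_v\| = \|J_v^{-1}\| = 1$ so that no constants are lost. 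Everything else is a routine unwinding of definitions, so I would keep the write-up short, emphasizing the factorizations $f = T_f\circ\Delta_v$ and $f^t = g^t\circ T^*$ and the norm bookkeeping.
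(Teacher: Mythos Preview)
Your proposal is correct and follows essentially the same route as the paper: both directions hinge on the linearization identity $f^t = J_v^{-1}\circ (T_f)^*$ to pass between $f^t\in\I$ and $T_f\in\I^\d$, and on the factorization $f^t = g^t\circ T^*$ for the converse, with the norm equality assembled from the two inequalities exactly as you do. The only cosmetic difference is that the paper invokes \cite[Theorem 2.7]{CabJimKet-24} to get $\|f\|_{\I^\d\circ\Hv^\infty} = \|T_f\|_{\I^\d}$ directly in the $(i)\Rightarrow(ii)$ step, whereas you obtain the needed inequality from the explicit factorization $f=T_f\circ\Delta_v$; either suffices.
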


\begin{proof}
$(i)\Rightarrow (ii)$: Let $f\in\I^{\Hv^\infty\text{-}\d}(U,F)$. Then $f^t \in\I(F^*,\Hv^\infty(U))$. By Theorem \ref{t0}, we can take $T_f \in\L(\Gv^\infty(U),F)$ such that $T_f \circ \Delta_v=f$ and also $(T_f)^*= J_v\circ f^t$. Hence $(T_f)^*\in \I(F^*,\Gv^\infty(U)^*)$ and therefore $T_f \in \I^\d(\Gv^\infty,F)$. Thus, by \cite[Theorem 2.7]{CabJimKet-24} we have $f \in \I^\d \circ\Hv^\infty(U,F)$ with $\|f\|_{\I^\d \circ \Hv^\infty} = \|T_f\|_{\I^\d}$. Further,
$$
\|f\|_{\I^\d \circ\Hv^\infty} = \|T_f\|_{\I^\d} = \|(T_f)^*\|_\I= \|J_v \circ f^t\|_\I \leq \|J_v\| \|f^t\|_\I= \|f\|_{\I^{\Hv^\infty\text{-}\d}}.
$$
$(ii)\Rightarrow (i)$: Let $f\in\I^\d\circ\Hv^\infty(U,F)$. Then there are a complex Banach space $G$, a map $g\in\Hv^\infty(U,G)$ and an operator $T\in\I^\text{dual}(G,F)$ such that $f=T\circ g$. Given $y^*\in F^*$, we have
$$
f^t(y^*)=(T\circ g)^t(y^*)=y^*\circ(T\circ g)=(y^*\circ T)\circ g=T^*(y^*)\circ g=g^t(T^*(y^*))=(g^t\circ T^*)(y^*),
$$
and thus $f^t=g^t\circ T^*$. Since $T^*\in\I(F^*,G^*)$ and $g^t\in\L(G^*,\Hv^\infty(U))$, we obtain that $f^t\in\I(F^*,\Hv^\infty(U))$. Hence $f\in\I^{\Hv^\infty\text{-}\d}(U,F)$ and since   
$$
\left\|f\right\|_{\I^{\Hv^\infty\text{-}\d}}=\left\|f^t\right\|_{\I}=\left\|g^t\circ T^*\right\|_{\I}\leq\left\|g^t\right\|\left\|T^*\right\|_{\I}=\|g\|_v\left\|T\right\|_{\I^\mathrm{dual}},
$$
and taking the infimum over all representations $T\circ g$ of $f$, we conclude that $\left\|f\right\|_{\I^{\Hv^\infty\text{-}\d}}\leq\left\|f\right\|_{\I^\mathrm{dual}\circ\Hv^\infty}$.
\end{proof}

Theorem \ref{theorem: transposition} enables us to include the following description of the dual weighted holomorphic ideal of a completely symmetric normed operator ideal. 

\begin{corollary}\label{cor-dual}
$[\I^{\Hv^\infty\text{-}\d},\left\|\cdot\right\|_{\I^{\Hv^\infty\text{-}\d}}]=[\I\circ\Hv^\infty,\left\|\cdot\right\|_{\I\circ\Hv^\infty}]$ whenever $[\I,\left\|\cdot\right\|_\I]$ is a completely symmetric normed operator ideal. $\hfill \square$
\end{corollary}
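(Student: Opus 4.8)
The plan is to deduce this directly from Theorem \ref{theorem: transposition}, with only the trivial observation that the composition procedure $\I\mapsto\I\circ\Hv^\infty$ respects equality of normed operator ideals. First I would recall that Theorem \ref{theorem: transposition} already gives, for an \emph{arbitrary} normed operator ideal $[\I,\|\cdot\|_\I]$, the identity
$$
[\I^{\Hv^\infty\text{-}\d},\|\cdot\|_{\I^{\Hv^\infty\text{-}\d}}]=[\I^\d\circ\Hv^\infty,\|\cdot\|_{\I^\d\circ\Hv^\infty}],
$$
so the whole statement is reduced to replacing $\I^\d$ by $\I$ on the right-hand side.

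Next I would invoke the hypothesis: $[\I,\|\cdot\|_\I]$ being completely symmetric means precisely $[\I,\|\cdot\|_\I]=[\I^\d,\|\cdot\|_{\I^\d}]$, i.e.\ for all complex Banach spaces $E,F$ one has $\I(E,F)=\I^\d(E,F)$ with $\|T\|_\I=\|T\|_{\I^\d}$ for every $T\in\I(E,F)$. Since, by \cite[Definition 2.5]{CabJimKet-24}, a mapping $f\in\H(U,F)$ lies in $\I^\d\circ\Hv^\infty(U,F)$ exactly when it factors as $f=T\circ g$ with $T\in\I^\d(G,F)$, $g\in\Hv^\infty(U,G)$ for some complex Banach space $G$, and since the norm $\|f\|_{\I^\d\circ\Hv^\infty}$ is the infimum of $\|T\|_{\I^\d}\|g\|_v$ over all such factorizations, the equality $\I^\d=\I$ (with matching norms) transfers verbatim to the factorization classes: $\I^\d\circ\Hv^\infty(U,F)=\I\circ\Hv^\infty(U,F)$ and $\|f\|_{\I^\d\circ\Hv^\infty}=\|f\|_{\I\circ\Hv^\infty}$. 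Chaining the two identities yields
$$
[\I^{\Hv^\infty\text{-}\d},\|\cdot\|_{\I^{\Hv^\infty\text{-}\d}}]=[\I^\d\circ\Hv^\infty,\|\cdot\|_{\I^\d\circ\Hv^\infty}]=[\I\circ\Hv^\infty,\|\cdot\|_{\I\circ\Hv^\infty}],
$$
which is the claim.

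There is essentially no obstacle here: the proof is a formal consequence of Theorem \ref{theorem: transposition} together with the definition of "completely symmetric". The only point worth writing out explicitly is the routine check that the composition-ideal assignment is well defined on equivalence classes of normed operator ideals (equal components and equal $\I$-norms give equal composition components and equal composition norms), which is immediate from the definition of $\|\cdot\|_{\I\circ\Hv^\infty}$ as an infimum over factorizations. So the corollary can be stated as a one-paragraph proof, citing Theorem \ref{theorem: transposition} and the hypothesis.
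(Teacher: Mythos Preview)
Your proof is correct and follows exactly the approach intended by the paper: the corollary is presented there without proof (the $\square$ marks it as immediate from Theorem~\ref{theorem: transposition}), and your argument simply makes explicit the one-line deduction of replacing $\I^\d$ by $\I$ under the completely symmetric hypothesis.
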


The operator ideal $\I=\F,\overline{\F},\K,\W$ is completely symmetric by \cite[Proposition 4.4.7]{Pie-80}. Then Corollaries \ref{cor-dual} and \ref{proposition: Banach ideal} give us the following identifications.

\begin{corollary}
$[\I^{\Hv^\infty\text{-}\d},\left\|\cdot\right\|_{\I^{\Hv^\infty\text{-}\d}}]=[\H_{v\I}^\infty,\left\|\cdot\right\|_v]$ for $\I=\F,\overline{\F},\K,\W$. $\hfill \square$
\end{corollary}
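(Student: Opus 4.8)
The plan is simply to concatenate the two identifications already at our disposal. First I would recall the classical fact, recorded in \cite[Proposition 4.4.7]{Pie-80}, that the normed operator ideals of finite-rank, approximable, compact and weakly compact operators are completely symmetric; that is, $[\I,\left\|\cdot\right\|_\I]=[\I^\d,\left\|\cdot\right\|_{\I^\d}]$ for $\I=\F,\overline{\F},\K,\W$. This rests on the elementary observations that a bounded operator $T$ is finite-rank (resp.\ approximable, compact, weakly compact) precisely when its adjoint $T^*$ is, together with $\left\|T\right\|=\left\|T^*\right\|$.

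Next I would apply Corollary \ref{cor-dual} to each of these four ideals: since each is a completely symmetric normed operator ideal, we obtain
$$
[\I^{\Hv^\infty\text{-}\d},\left\|\cdot\right\|_{\I^{\Hv^\infty\text{-}\d}}]=[\I\circ\Hv^\infty,\left\|\cdot\right\|_{\I\circ\Hv^\infty}]\qquad(\I=\F,\overline{\F},\K,\W).
$$
Then Corollary \ref{proposition: Banach ideal} identifies the right-hand side, namely $[\I\circ\Hv^\infty,\left\|\cdot\right\|_{\I\circ\Hv^\infty}]=[\H_{v\I}^\infty,\left\|\cdot\right\|_v]$ for $\I=\F,\overline{\F},\K,\W$ (indeed for the larger list $\F,\overline{\F},\K,\W,\S,\R,\AS$). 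Chaining the two equalities yields the asserted identity of normed weighted holomorphic ideals, with matching norms.

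I do not expect any genuine obstacle here: the substance has already been done in Theorem \ref{theorem: transposition} (hence Corollary \ref{cor-dual}) and in Corollary \ref{proposition: Banach ideal}. The only points deserving a moment's care are verifying that the four specific ideals really are completely symmetric — which is why the precise reference to \cite[Proposition 4.4.7]{Pie-80} is needed, and which in particular does \emph{not} cover $\S,\R,\AS$ — and checking that the norm normalizations line up across the chain; both are routine.
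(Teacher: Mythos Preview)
Your proposal is correct and follows essentially the same route as the paper: invoke \cite[Proposition 4.4.7]{Pie-80} for the complete symmetry of $\I=\F,\overline{\F},\K,\W$, then chain Corollary~\ref{cor-dual} with Corollary~\ref{proposition: Banach ideal}. Nothing further is needed.
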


On the injectivity property, we now can give the following.

\begin{corollary}\label{c10}
If $[\I,\left\|\cdot\right\|_\I]$ is a completely symmetric injective normed operator ideal, then the weighted holomorphic ideal $[\I^{\Hv^\infty\text{-}\d},\left\|\cdot\right\|_{\I^{\Hv^\infty\text{-}\d}}]$ is injective.
\end{corollary}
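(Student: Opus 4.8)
The plan is to reduce the claim to results already established. By Corollary \ref{cor-dual}, since $[\I,\left\|\cdot\right\|_\I]$ is completely symmetric, we have the identification
$$
[\I^{\Hv^\infty\text{-}\d},\left\|\cdot\right\|_{\I^{\Hv^\infty\text{-}\d}}]=[\I\circ\Hv^\infty,\left\|\cdot\right\|_{\I\circ\Hv^\infty}].
$$
So it suffices to prove that $[\I\circ\Hv^\infty,\left\|\cdot\right\|_{\I\circ\Hv^\infty}]$ is an injective weighted holomorphic ideal. But this is precisely the content of Corollary \ref{c4}: if $[\I,\left\|\cdot\right\|_\I]$ is an injective normed operator ideal, then $[\I\circ\Hv^\infty,\left\|\cdot\right\|_{\I\circ\Hv^\infty}]$ is injective. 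Since by hypothesis $[\I,\left\|\cdot\right\|_\I]$ is injective, Corollary \ref{c4} applies and the result follows immediately.

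Alternatively, one can invoke Proposition \ref{p3} together with Corollary \ref{c1}: Proposition \ref{p3} gives
$$
[(\I\circ\Hv^\infty)^{inj},\|\cdot\|_{(\I\circ\Hv^\infty)^{inj}}]=[\I^{inj}\circ\Hv^\infty,\|\cdot\|_{\I^{inj}\circ\Hv^\infty}],
$$
and since $\I$ is injective we have $[\I^{inj},\left\|\cdot\right\|_{\I^{inj}}]=[\I,\left\|\cdot\right\|_\I]$, whence $[(\I\circ\Hv^\infty)^{inj},\|\cdot\|_{(\I\circ\Hv^\infty)^{inj}}]=[\I\circ\Hv^\infty,\|\cdot\|_{\I\circ\Hv^\infty}]$; by Corollary \ref{c1} this equality of an ideal with its injective hull is equivalent to injectivity. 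Chaining this with Corollary \ref{cor-dual} again yields that $[\I^{\Hv^\infty\text{-}\d},\left\|\cdot\right\|_{\I^{\Hv^\infty\text{-}\d}}]$ is injective.

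There is essentially no obstacle here: the statement is a direct corollary obtained by concatenating Corollary \ref{cor-dual} (to pass from the dual weighted holomorphic ideal to the composition ideal, using complete symmetry) with Corollary \ref{c4} or with the pair Proposition \ref{p3}–Corollary \ref{c1} (to transfer injectivity from $\I$ to the composition ideal). The only point requiring a moment's care is that complete symmetry of $\I$ is genuinely needed to apply Corollary \ref{cor-dual}; mere symmetry would only give one inclusion. With that in place the argument is a one-line chain of previously proved identities.
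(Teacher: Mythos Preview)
Your proposal is correct and follows essentially the same route as the paper. The paper writes out the chain of equalities explicitly using Theorem~\ref{theorem: transposition}, Proposition~\ref{p3}, complete symmetry and injectivity of $\I$, and then invokes Corollary~\ref{c1}; your argument packages the same steps by citing Corollary~\ref{cor-dual} and Corollary~\ref{c4} (your alternative via Proposition~\ref{p3} and Corollary~\ref{c1} is exactly the paper's computation), so there is no substantive difference.
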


\begin{proof}
Applying Theorem \ref{theorem: transposition}, Proposition \ref{p3} and the properties of $[\I,\left\|\cdot\right\|_\I]$, we have
\begin{align*}
[(\I^{\Hv^\infty\text{-}\d})^{inj},\left\|\cdot\right\|_{(\I^{\Hv^\infty\text{-}\d})^{inj}}]
&=[(\I^{dual}\circ\Hv^\infty)^{inj},\left\|\cdot\right\|_{(\I^{dual}\circ\Hv^\infty)^{inj}}]=[(\I^{dual})^{inj}\circ\Hv^\infty,\left\|\cdot\right\|_{(\I^{dual})^{inj}\circ\Hv^\infty}]\\
&=[\I^{inj}\circ\Hv^\infty,\left\|\cdot\right\|_{\I^{inj}\circ\Hv^\infty}]=[\I\circ\Hv^\infty,\left\|\cdot\right\|_{\I\circ\Hv^\infty}]\\
&=[\I^{dual}\circ\Hv^\infty,\left\|\cdot\right\|_{\I^{dual}\circ\Hv^\infty}]=[\I^{\Hv^\infty\text{-}\d},\left\|\cdot\right\|_{\I^{\Hv^\infty\text{-}\d}}],
\end{align*}
and the result follows from Corollary \ref{c1}.
\end{proof}

Now, we describe the dual weighted holomorphic ideals of both the ideal $\K_p$ of $p$-compact operators \cite{Pie-13} and the ideal $\Du_p$ of Cohen strongly $p$-summing operators \cite{Coh-73}. As usual, $\Nu_p$ denotes the ideal of $p$-nuclear operators, $\I_p$ the ideal of $p$-integral operators, and $\Pi_p$ the ideal of absolutely $p$-summing operators (see \cite{Pie-80}).

\begin{corollary}\label{c11}
Let $\I$ and $\J$ be Banach operator ideals such that $\I^{\d}=\J^{inj}$. Then $\I^{\Hv^\infty\text{-}\d}=(\J\circ\Hv^\infty)^{inj}$. As a consequence, $\K_p^{\Hv^\infty\text{-}\d}=(\Nu_p\circ\Hv^\infty)^{inj}$ and $\Du_p^{\Hv^\infty\text{-}\d}=(\I_{p^*}\circ\Hv^\infty)^{inj}$ for any $p\in (1,\infty)$, where $p^*$ denotes the H\"older conjugate of $p$.
\end{corollary}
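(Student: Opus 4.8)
The plan is to read the general identity straight off the two structural results already proved. By Theorem~\ref{theorem: transposition}, for a normed operator ideal $[\I,\|\cdot\|_\I]$ one has, isometrically,
$$
[\I^{\Hv^\infty\text{-}\d},\|\cdot\|_{\I^{\Hv^\infty\text{-}\d}}]=[\I^{\d}\circ\Hv^\infty,\|\cdot\|_{\I^{\d}\circ\Hv^\infty}].
$$
Under the hypothesis $\I^{\d}=\J^{inj}$ the right-hand side becomes $[\J^{inj}\circ\Hv^\infty,\|\cdot\|_{\J^{inj}\circ\Hv^\infty}]$, and Proposition~\ref{p3}, applied to the ideal $\J$, rewrites this as $[(\J\circ\Hv^\infty)^{inj},\|\cdot\|_{(\J\circ\Hv^\infty)^{inj}}]$. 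Chaining the three equalities gives $\I^{\Hv^\infty\text{-}\d}=(\J\circ\Hv^\infty)^{inj}$ as normed weighted holomorphic ideals, which is the first assertion. The only thing to watch is that Theorem~\ref{theorem: transposition}, Proposition~\ref{p3} and the hypothesis $\I^{\d}=\J^{inj}$ are all used as equalities of \emph{normed} ideals, so that the resulting identity is isometric and not merely a set equality.

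For the two concrete instances it then suffices, in each case, to verify the hypothesis $\I^{\d}=\J^{inj}$, and this reduces to classical operator-ideal duality. For the first, I would recall that an operator is $p$-compact exactly when its adjoint is quasi-$p$-nuclear, and that the ideal of quasi-$p$-nuclear operators coincides (isometrically) with the injective hull $\Nu_p^{inj}$ of the ideal of $p$-nuclear operators, a Persson--Pietsch theorem (see \cite{Pie-80}); hence $\K_p^{\d}=\Nu_p^{inj}$. The general part with $\I=\K_p$ and $\J=\Nu_p$ then gives $\K_p^{\Hv^\infty\text{-}\d}=(\Nu_p\circ\Hv^\infty)^{inj}$.

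For the second, I would use Cohen's theorem \cite{Coh-73}: $\Du_p$ is the dual ideal of $\Pi_{p^*}$, i.e. $T\in\Du_p(E,F)$ if and only if $T^{*}\in\Pi_{p^*}(F^{*},E^{*})$ with equal norms, so $\Du_p=\Pi_{p^*}^{\d}$. Since $\Pi_{p^*}$ is a maximal Banach operator ideal (in particular regular and injective), $(\Pi_{p^*}^{\d})^{\d}=\Pi_{p^*}$, whence $\Du_p^{\d}=\Pi_{p^*}$. Moreover $\Pi_{p^*}=\I_{p^*}^{inj}$: the inclusion $\I_{p^*}\subseteq\Pi_{p^*}$ together with the injectivity of $\Pi_{p^*}$ gives $\I_{p^*}^{inj}\subseteq\Pi_{p^*}$, while for $T\in\Pi_{p^*}(E,F)$ the composition $\iota_F\circ T$ is $p^*$-summing with range in the $C(K)$-space $\ell_\infty(B_{F^*})$, hence $p^*$-integral, so that $T\in\I_{p^*}^{inj}(E,F)$; this identification is again isometric. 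Therefore $\Du_p^{\d}=\I_{p^*}^{inj}$, and the general part with $\I=\Du_p$ and $\J=\I_{p^*}$ yields $\Du_p^{\Hv^\infty\text{-}\d}=(\I_{p^*}\circ\Hv^\infty)^{inj}$.

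In short, the argument meets no obstacle internal to the weighted holomorphic machinery: its core is the three-term concatenation of Theorem~\ref{theorem: transposition} and Proposition~\ref{p3}. What genuinely needs care is the isometric bookkeeping and the precise statement and attribution of the two classical facts $\K_p^{\d}=\Nu_p^{inj}$ and $\Du_p^{\d}=\Pi_{p^*}=\I_{p^*}^{inj}$; among these, the least routine is the identification $\Pi_{p^*}=\I_{p^*}^{inj}$, which rests on the classical fact (see \cite{Pie-80}) that $q$-summing operators with values in a $C(K)$-space are $q$-integral.
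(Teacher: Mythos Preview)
Your proof is correct and follows essentially the same approach as the paper: the three-term chain $\I^{\Hv^\infty\text{-}\d}=\I^{\d}\circ\Hv^\infty=\J^{inj}\circ\Hv^\infty=(\J\circ\Hv^\infty)^{inj}$ via Theorem~\ref{theorem: transposition} and Proposition~\ref{p3} is exactly what the paper does. The only difference is that you supply detailed arguments for the classical identifications $\K_p^{\d}=\Nu_p^{inj}$ and $\Du_p^{\d}=\Pi_{p^*}=\I_{p^*}^{inj}$, whereas the paper simply cites \cite[Theorem~6]{Pie-13} for the first and \cite{Coh-73} together with Pietsch for the second; your attributions differ slightly (you route $\K_p^{\d}=\Nu_p^{inj}$ through quasi-$p$-nuclear operators and \cite{Pie-80}) but the mathematics is the same.
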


\begin{proof}
The combination of Theorem \ref{theorem: transposition} and Proposition \ref{p3} gives
$$
\I^{\Hv^\infty\text{-}\d}=\I^{dual}\circ\Hv^\infty=\J^{inj}\circ\Hv^\infty=(\J\circ\Hv^\infty)^{inj}.
$$
This equality yields the consequence in view that $\K_p^{\d}=\Nu_p^{inj}$ by \cite[Theorem 6]{Pie-13}, and $\Du_p^{\d}=\Pi_{p^*}=\I_{p^*}^{inj}$ by \cite{Coh-73} and \cite[Theorem 2.9.7]{Pie-13}.
\end{proof}



\subsection{The closed injective hull of ideals of weighted holomorphic mappings}\label{subsection 5}

According to \cite[Section 4.2.1]{Pie-80}, given an operator ideal $\I$ and Banach spaces $E,F$, an operator $T\in\L(E,F)$ is in the closure of $\I(E,F)$ in $(\L(E,F),\left\|\cdot\right\|)$, denoted by $\overline{\I}(E,F)$, if there exists a sequence $(T_n)$ in $\I(E,F)$ such that $\lim_{n\to\infty}\left\|T_n-T\right\|=0$. In this way, the components $\overline{\I}(E,F)$ define an operator ideal $\overline{\I}$.

This concept motivates the following in the setting of weighted holomorphic maps.

\begin{definition}
Let $U$ be an open set of a complex Banach space $E$, let $v$ be a weight on $U$ and let $F$ be a complex Banach space. Given a weighted holomorphic ideal $\I^{\Hv^\infty}$, a map $f\in\Hv^\infty(U,F)$ is said to belong to the closure of $\I^{\Hv^\infty}(U,F)$ in $(\Hv^\infty(U,F),\|\cdot\|_v)$, and it is denoted by $f\in\overline{\I^{\Hv^\infty}}(U,F)$, if there exists a sequence $(f_n)$ in $\I^{\Hv^\infty}(U,F)$ such that $\lim_{n\to\infty}\|f_n-f\|_v=0$.  
\end{definition}

It is easy to prove the following result.

\begin{proposition}
Let $\I^{\Hv^\infty}$ be a weighted holomorphic ideal. Then $\overline{\I^{\Hv^\infty}}$ is a weighted holomorphic ideal containing $\I^{\Hv^\infty}$, and it is called the closure of $\I^{\Hv^\infty}$.

We say that $\I^{\Hv^\infty}$ is closed if $\I^{\Hv^\infty}=\overline{\I^{\Hv^\infty}}$, and we call closed injective hull of $\I^{\Hv^\infty}$ to the injective hull of the ideal $\overline{\I^{\Hv^\infty}}$ and it is denoted by $(\overline{\I^{\Hv^\infty}})^{inj}$. $\hfill\qed$
\end{proposition}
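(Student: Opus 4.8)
The plan is to check directly that $\overline{\I^{\Hv^\infty}}$ meets the three defining requirements of a weighted holomorphic ideal: that each component $\overline{\I^{\Hv^\infty}}(U,F)$ is a linear subspace of $\Hv^\infty(U,F)$, that it contains every elementary map $h\cdot y$ with $h\in\Hv^\infty(U)$ and $y\in F$, and that it is stable under the ideal operation $f\mapsto T\circ f\circ h$. I would record the containment $\I^{\Hv^\infty}\subseteq\overline{\I^{\Hv^\infty}}$ first, as it is trivial --- any $f\in\I^{\Hv^\infty}(U,F)$ is the $\|\cdot\|_v$-limit of the constant sequence $f_n=f$ --- and it immediately delivers that $h\cdot y$ lies in $\overline{\I^{\Hv^\infty}}(U,F)$, since $h\cdot y\in\I^{\Hv^\infty}(U,F)$.

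Next I would verify the linear-subspace property. Given $f,g\in\overline{\I^{\Hv^\infty}}(U,F)$ and $\lambda,\mu\in\C$, choose sequences $(f_n),(g_n)$ in $\I^{\Hv^\infty}(U,F)$ with $\|f_n-f\|_v\to0$ and $\|g_n-g\|_v\to0$; since $\I^{\Hv^\infty}(U,F)$ is a linear space, $\lambda f_n+\mu g_n\in\I^{\Hv^\infty}(U,F)$, and the triangle inequality for $\|\cdot\|_v$ gives $\|(\lambda f_n+\mu g_n)-(\lambda f+\mu g)\|_v\to0$, so $\lambda f+\mu g\in\overline{\I^{\Hv^\infty}}(U,F)$. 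Membership of the limit in $\Hv^\infty(U,F)$ is automatic, $(\Hv^\infty(U,F),\|\cdot\|_v)$ being complete.

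The one step that involves an actual estimate is the ideal property. Let $V\subseteq U$ be open, $h\in\H(V,U)$ with $c_v(h)<\infty$, $f\in\overline{\I^{\Hv^\infty}}(U,F)$, and $T\in\L(F,G)$; pick $(f_n)$ in $\I^{\Hv^\infty}(U,F)$ with $\|f_n-f\|_v\to0$. By the ideal property of $\I^{\Hv^\infty}$, each $T\circ f_n\circ h$ belongs to $\I^{\Hv^\infty}(V,G)$, so it is enough to check that $\|T\circ f_n\circ h-T\circ f\circ h\|_v\to0$. For $x\in V$,
$$
v(x)\left\|T(f_n(h(x)))-T(f(h(x)))\right\|\le\left\|T\right\|\,\frac{v(x)}{v(h(x))}\,v(h(x))\left\|f_n(h(x))-f(h(x))\right\|\le\left\|T\right\|c_v(h)\left\|f_n-f\right\|_v,
$$
and taking the supremum over $x\in V$ yields $\|T\circ f_n\circ h-T\circ f\circ h\|_v\le\|T\|c_v(h)\|f_n-f\|_v\to0$, hence $T\circ f\circ h\in\overline{\I^{\Hv^\infty}}(V,G)$. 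I do not expect a genuine obstacle here: this is the standard ``the norm closure of an ideal is again an ideal'' argument, and the only mildly delicate point is inserting the ratio $v(x)/v(h(x))$ so that the weighted supremum on $V$ is dominated by $c_v(h)$ times the weighted supremum on $U$.
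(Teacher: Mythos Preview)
Your proof is correct and is exactly the routine verification the paper has in mind; the paper itself omits the argument entirely, declaring the result ``easy to prove'' and placing a \qed\ after the statement. Your three checks---linear subspace, containment of each $h\cdot y$, and stability under $f\mapsto T\circ f\circ h$ via the estimate $\|T\circ f_n\circ h-T\circ f\circ h\|_v\le\|T\|\,c_v(h)\,\|f_n-f\|_v$---are precisely the standard closure-of-an-ideal computation, so there is nothing to compare.
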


The closed injective hull of a weighted holomorphic ideal of composition type admits the following description.

\begin{proposition}\label{p4}
Let $[\I,\left\|\cdot\right\|_\I]$ be an operator ideal. Then 
$$
[(\overline{\I\circ\Hv^\infty})^{inj},\|\cdot\|_{(\overline{\I\circ\Hv^\infty})^{inj}}]=[(\overline{\I})^{inj}\circ\Hv^\infty,\|\cdot\|_{(\overline{\I})^{inj}\circ\Hv^\infty}].
$$
In particular, the weighted holomorphic ideal $[(\overline{\I})^{inj}\circ\Hv^\infty,\|\cdot\|_{(\overline{\I})^{inj}\circ\Hv^\infty}]$ is injective.
\end{proposition}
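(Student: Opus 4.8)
The plan is to reduce Proposition \ref{p4} to Proposition \ref{p3} by showing that $\overline{\I\circ\Hv^\infty}=\overline{\I}\circ\Hv^\infty$ as normed weighted holomorphic ideals, and then apply Proposition \ref{p3} with the operator ideal $\overline{\I}$ in place of $\I$. Concretely, once the identity $[\overline{\I\circ\Hv^\infty},\|\cdot\|]=[\overline{\I}\circ\Hv^\infty,\|\cdot\|_{\overline{\I}\circ\Hv^\infty}]$ is available, Proposition \ref{p3} applied to $\overline{\I}$ yields
$$
[(\overline{\I\circ\Hv^\infty})^{inj},\|\cdot\|_{(\overline{\I\circ\Hv^\infty})^{inj}}]=[(\overline{\I}\circ\Hv^\infty)^{inj},\|\cdot\|_{(\overline{\I}\circ\Hv^\infty)^{inj}}]=[(\overline{\I})^{inj}\circ\Hv^\infty,\|\cdot\|_{(\overline{\I})^{inj}\circ\Hv^\infty}],
$$
and the ``in particular'' clause is then immediate from the last assertion of Proposition \ref{p3}.

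So the real content is the ideal identity $\overline{\I\circ\Hv^\infty}=\overline{\I}\circ\Hv^\infty$. First I would prove the inclusion $[\overline{\I}\circ\Hv^\infty,\|\cdot\|_{\overline{\I}\circ\Hv^\infty}]\leq[\overline{\I\circ\Hv^\infty},\|\cdot\|]$. If $f=T\circ g$ with $T\in\overline{\I}(G,F)$ and $g\in\Hv^\infty(U,G)$, pick $T_n\in\I(G,F)$ with $\|T_n-T\|\to 0$; then $T_n\circ g\in\I\circ\Hv^\infty(U,F)$ and $\|T_n\circ g-f\|_v=\|(T_n-T)\circ g\|_v\leq\|T_n-T\|\,\|g\|_v\to 0$, so $f\in\overline{\I\circ\Hv^\infty}(U,F)$; moreover $\|f\|_v\leq\|T\|_{\overline{\I}}\,\|g\|_v$ uniformly over such factorizations, which matches the $v$-norm bound. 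For the reverse inclusion, let $f\in\overline{\I\circ\Hv^\infty}(U,F)$ and choose $f_n\in\I\circ\Hv^\infty(U,F)$ with $\|f_n-f\|_v\to 0$. Here I would pass through the linearization of Theorem \ref{t0}: writing $f_n=T_{f_n}\circ\Delta_v$ and $f=T_f\circ\Delta_v$ with $T_{f_n},T_f\in\L(\Gv^\infty(U),F)$, the isometry $\|T_h\|=\|h\|_v$ gives $\|T_{f_n}-T_f\|=\|T_{f_n-f}\|=\|f_n-f\|_v\to 0$. One needs that $f_n\in\I\circ\Hv^\infty(U,F)$ forces $T_{f_n}\in\I(\Gv^\infty(U),F)$; this is exactly the kind of statement recorded in \cite[Theorem 2.7]{CabJimKet-24} (used already in the proof of Theorem \ref{theorem: transposition}) — $f\in\I\circ\Hv^\infty$ iff $T_f\in\I(\Gv^\infty(U),F)$. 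Hence $T_f\in\overline{\I}(\Gv^\infty(U),F)$, and therefore $f=T_f\circ\Delta_v\in\overline{\I}\circ\Hv^\infty(U,F)$ with $\|f\|_{\overline{\I}\circ\Hv^\infty}\leq\|T_f\|_{\overline{\I}}\,\|\Delta_v\|_v\leq\|T_f\|_{\overline{\I}}=\lim_n\|T_{f_n}\|_{\overline{\I}}$; comparing with the $v$-norm on the closure and using $\|f\|_v=\|T_f\|$ finishes the norm equality.

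The main obstacle I anticipate is the bookkeeping for the norms: the closure ideal $\overline{\I\circ\Hv^\infty}$ is equipped with the ambient $v$-norm $\|\cdot\|_v$ (this is how the closure was defined), whereas $\overline{\I}\circ\Hv^\infty$ carries the infimum composition norm $\|\cdot\|_{\overline{\I}\circ\Hv^\infty}$, and one must check these two norms actually agree on the common underlying set rather than merely being equivalent. The inclusion-of-normed-ideals relation $\leq$ reverses norms, so I must verify both $\|\cdot\|_v\leq\|\cdot\|_{\overline{\I}\circ\Hv^\infty}$ and $\|\cdot\|_{\overline{\I}\circ\Hv^\infty}\leq\|\cdot\|_v$; the first is the property (P1) bound for the composition ideal $\overline{\I}\circ\Hv^\infty$ (already known from \cite[Proposition 2.6]{CabJimKet-24} applied to $\overline{\I}$), and the second comes from the linearization argument above, since the factorization $f=T_f\circ\Delta_v$ with $\|\Delta_v\|_v\leq 1$ and $\|T_f\|_{\overline{\I}}=\|f\|_v$ (the latter being the content of Theorem \ref{linear}-type reasoning, i.e.\ $\overline{\I}$ in place of $\I$, using that $\overline{\I}$ inherits $\|T_f\|_{\overline{\I}}=\lim\|T_{f_n}\|_{\I}\geq\lim\|f_n\|_v=\|f\|_v$ together with the trivial $\|T_f\|\leq\|T_f\|_{\overline{\I}}$... ) gives $\|f\|_{\overline{\I}\circ\Hv^\infty}\leq\|f\|_v$. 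Once these two bounds are in hand the identity of normed ideals holds, and the proposition follows by the reduction to Proposition \ref{p3} described above. A secondary minor point is to confirm that $\overline{\I}$ is genuinely an operator ideal (so that Proposition \ref{p3} applies to it), which is recalled in the paragraph preceding the statement, citing \cite[Section 4.2.1]{Pie-80}.
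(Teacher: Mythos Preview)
Your proposal is correct and follows essentially the same route as the paper: establish the set equality $\overline{\I\circ\Hv^\infty}(U,F)=\overline{\I}\circ\Hv^\infty(U,F)$ via the linearization $f\mapsto T_f$ and then invoke Proposition~\ref{p3} for the operator ideal $\overline{\I}$. The paper organizes the inclusion $\overline{\I\circ\Hv^\infty}\subseteq\overline{\I}\circ\Hv^\infty$ slightly differently---it first shows $\overline{\I}\circ\Hv^\infty$ is closed (using exactly your linearization argument) and then observes $\I\circ\Hv^\infty\subseteq\overline{\I}\circ\Hv^\infty$---and it omits your norm bookkeeping entirely, since both sides automatically carry $\|\cdot\|_v$ (the closure by definition, and $\overline{\I}\circ\Hv^\infty$ because $\|T\|_{\overline{\I}}=\|T\|$).
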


\begin{proof}
We claim that $\overline{\I}\circ\Hv^\infty(U,F)=\overline{\I\circ\Hv^\infty}(U,F)$. 
Indeed, note first that $\overline{\I}\circ\Hv^\infty(U,F)$ is closed: let $f\in\Hv^\infty(U,F)$ and assume that $(f_n)_{n\in\N}$ is a sequence in $\overline{\I}\circ\Hv^\infty(U,F)$ such that $\|f_n-f\|_v\to 0$ as $n\to\infty$; since $T_{f_n}\in\overline{\I}(\Gv^\infty(U),F)$ by Theorem \ref{linear} and $\left\|T_{f_n}-T_f\right\|=\|f_n-f\|_v$ for all $n\in\N$ by Theorem \ref{t0}, we have that $T_f\in\overline{\I}(\Gv^\infty(U),F)$, and thus $f\in\overline{\I}\circ\Hv^\infty(U,F)$ again by Theorem \ref{linear}. 

Now, from $\I\circ\Hv^\infty(U,F)\subseteq\overline{\I}\circ\Hv^\infty(U,F)$, we infer that $\overline{\I\circ\Hv^\infty}(U,F)\subseteq\overline{\I}\circ\Hv^\infty(U,F)$. For the converse, take $f\in\overline{\I}\circ\Hv^\infty(U,F)$; hence $T=T\circ g$ for some complex Banach space $G$, $T\in\overline{\I}(G,F)$ and $g\in\Hv^\infty(U,G)$; thus we can find a sequence $(T_n)_{n\in\N}$ in $\I(G,F)$ such that $\left\|T_n-T\right\|\to 0$ as $n\to\infty$, and since $\|T_n\circ g-T\circ g\|_v=\|(T_n-T)\circ g\|_v\leq\left\|T_n-T\right\|\|g\|_v$ for all $n\in\N$, we deduce that $f\in\overline{\I\circ\Hv^\infty}(U,F)$, and this proves our claim.

Now, using Proposition \ref{p3}, we conclude that
$$
[(\overline{\I})^{inj}\circ\Hv^\infty,\|\cdot\|_{(\overline{\I})^{inj}\circ\Hv^\infty}]
=[(\overline{\I}\circ\Hv^\infty)^{inj},\left\|\cdot\right\|_{(\overline{\I}\circ\Hv^\infty)^{inj}}]
=[(\overline{\I\circ\Hv^\infty})^{inj},\|\cdot\|_{(\overline{\I\circ\Hv^\infty})^{inj}}].
$$
\end{proof}

In terms of an Ehrling-type inequality \cite{Ehr-54}, Jarchow and Pelczy\'nski characterized the closed injective hull of a Banach operator ideal in \cite[Theorem 20.7.3]{Jar-81}. We now present a variant of this result for weighted holomorphic maps.
 
\begin{theorem}\label{t2}
For a weighted holomorphic ideal $\I^{\Hv^\infty}$ and $f\in\Hv^\infty(U,F)$, the following are equivalent:
\begin{enumerate}
\item $f$ belongs to $(\overline{\I^{\Hv^\infty}})^{inj}(U,F)$.
\item For each $\varepsilon>0$, there are a complex normed space $G_\varepsilon$ and a mapping $g_\varepsilon\in\I^{\Hv^\infty}(U,G_\varepsilon)$ such that
$$
\left\|\sum_{i=1}^n\lambda_i v(x_i)f( x_i)\right\|\leq\left\|\sum_{i=1}^n\lambda_i v(x_i) g_\varepsilon( x_i)\right\|+\varepsilon\sum_{i=1}^n\left|\lambda_i\right|
$$
for all $n\in\N$, $\lambda_1,\ldots,\lambda_n\in\C$ and $x_1,\ldots, x_n\in U$. 
\end{enumerate}
\end{theorem}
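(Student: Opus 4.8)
The plan is to combine the domination characterization of the injective hull (Theorem \ref{t1}) with the observation established in Proposition \ref{p4} -- in the composition setting -- that closure commutes with the various hull constructions, but now argued intrinsically for an arbitrary weighted holomorphic ideal. The cleanest route is to reduce everything to Theorem \ref{t1} applied to the ideal $\overline{\I^{\Hv^\infty}}$: a map $f$ lies in $(\overline{\I^{\Hv^\infty}})^{inj}(U,F)$ if and only if there exist a complex normed space $G$ and a mapping $g\in\overline{\I^{\Hv^\infty}}(U,G)$ dominating $f$ in the sense that $\left\|\sum_{i=1}^n\lambda_i v(x_i)f(x_i)\right\|\leq\left\|\sum_{i=1}^n\lambda_i v(x_i)g(x_i)\right\|$ for all finite families. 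So the content of the theorem is that the existence of a \emph{single} dominating $g$ in the closure $\overline{\I^{\Hv^\infty}}$ is equivalent to the existence, for every $\varepsilon>0$, of a dominating-up-to-$\varepsilon\sum|\lambda_i|$ map $g_\varepsilon$ already in $\I^{\Hv^\infty}$ itself.

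For the implication (i)$\Rightarrow$(ii): by Theorem \ref{t1} applied to $\overline{\I^{\Hv^\infty}}$, pick $g\in\overline{\I^{\Hv^\infty}}(U,G)$ dominating $f$. By definition of the closure there is a sequence $(g_n)$ in $\I^{\Hv^\infty}(U,G)$ with $\|g_n-g\|_v\to 0$. Given $\varepsilon>0$, choose $n$ with $\|g_n-g\|_v<\varepsilon$ and set $g_\varepsilon=g_n$. Then for any finite family, using the domination of $f$ by $g$ and the triangle inequality,
\begin{align*}
\left\|\sum_{i=1}^n\lambda_i v(x_i)f(x_i)\right\|
&\leq\left\|\sum_{i=1}^n\lambda_i v(x_i)g(x_i)\right\|\\
&\leq\left\|\sum_{i=1}^n\lambda_i v(x_i)g_\varepsilon(x_i)\right\|+\left\|\sum_{i=1}^n\lambda_i v(x_i)(g(x_i)-g_\varepsilon(x_i))\right\|\\
&\leq\left\|\sum_{i=1}^n\lambda_i v(x_i)g_\varepsilon(x_i)\right\|+\sum_{i=1}^n|\lambda_i|\,v(x_i)\|g(x_i)-g_\varepsilon(x_i)\|\\
&\leq\left\|\sum_{i=1}^n\lambda_i v(x_i)g_\varepsilon(x_i)\right\|+\varepsilon\sum_{i=1}^n|\lambda_i|,
\end{align*}
since $v(x_i)\|g(x_i)-g_\varepsilon(x_i)\|\leq\|g-g_\varepsilon\|_v<\varepsilon$. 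This gives (ii).

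For the converse (ii)$\Rightarrow$(i), the idea is to manufacture, out of the family $\{g_\varepsilon\}$, a \emph{single} map in $\overline{\I^{\Hv^\infty}}$ that dominates $f$, then invoke Theorem \ref{t1}. Take $\varepsilon_k=2^{-k}$ and let $g_k:=g_{\varepsilon_k}\in\I^{\Hv^\infty}(U,G_k)$. Form the $\ell_\infty$-sum $G=\left(\bigoplus_k G_k\right)_{\ell_\infty}$ and consider, for a fixed summable sequence of positive weights $c_k$ (say $c_k=2^{-k}$), the map $g\colon U\to G$ given by $g(x)=(c_k\, g_k(x))_k$. One checks $g\in\Hv^\infty(U,G)$ because $\|g\|_v\leq\sup_k c_k\|g_k\|_v<\infty$ once we also arrange (rescaling each $g_k$ by a harmless constant, using (P2)/(P3) to keep it in the ideal) that $\|g_k\|_v$ is controlled; more carefully, one works with the truncated partial sums $g^{(m)}(x)=(c_1 g_1(x),\dots,c_m g_m(x),0,0,\dots)$, each of which lies in $\I^{\Hv^\infty}(U,G)$ by the ideal property, and shows $\|g^{(m)}-g\|_v\to 0$, so that $g\in\overline{\I^{\Hv^\infty}}(U,G)$. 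Finally, the domination: for a finite family, the $\ell_\infty$-norm of $\sum_i\lambda_i v(x_i)g(x_i)$ is the supremum over $k$ of $c_k\left\|\sum_i\lambda_i v(x_i)g_k(x_i)\right\|$; combining the inequalities in (ii) for each $\varepsilon_k$ with a suitable choice of the $c_k$ and letting the error $\varepsilon_k\sum|\lambda_i|$ be absorbed, one deduces $\left\|\sum_i\lambda_i v(x_i)f(x_i)\right\|\leq\left\|\sum_i\lambda_i v(x_i)g(x_i)\right\|$ — the point being that the $\varepsilon_k\sum|\lambda_i|$ term goes to $0$ while the main term on the right is, up to the weight $c_k$, at least as big as a quantity tending to $\left\|\sum_i\lambda_i v(x_i)f(x_i)\right\|$. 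Theorem \ref{t1} then yields $f\in(\overline{\I^{\Hv^\infty}})^{inj}(U,F)$.

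The main obstacle is the bookkeeping in (ii)$\Rightarrow$(i): making the $\ell_\infty$-sum construction produce a map that is genuinely in the \emph{closure} of $\I^{\Hv^\infty}$ (not merely in some larger ideal) while \emph{simultaneously} dominating $f$ exactly, with no residual $\varepsilon$. The delicate point is that the weights $c_k$ multiplying the $g_k$ shrink the right-hand side of the domination inequality, so one must check that the shrinkage is compensated — this is exactly where one uses that the error term $\varepsilon_k\sum|\lambda_i|$ can be taken as small as $c_k$ times anything, i.e. one should choose $g_k$ corresponding to $\varepsilon_k$ \emph{much} smaller than $c_k$, say $\varepsilon_k=c_k^2$ or $\varepsilon_k/c_k\to 0$, so that $\left\|\sum_i\lambda_i v(x_i)f(x_i)\right\|-(\varepsilon_k/c_k)\sum|\lambda_i|\leq\left\|\sum_i\lambda_i v(x_i)g_k(x_i)\right\|$ forces the supremum over $k$ of $c_k\left\|\sum_i\lambda_i v(x_i)g_k(x_i)\right\|$ to be at least $\left\|\sum_i\lambda_i v(x_i)f(x_i)\right\|$ in the limit. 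Once this calibration is set up correctly, the rest is routine application of Theorem \ref{t1}.
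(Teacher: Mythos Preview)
Your direction (i)$\Rightarrow$(ii) is correct and essentially the paper's argument: the paper simply takes $g=\iota_F\circ f$ as the dominating map (which is exactly what Theorem~\ref{t1} produces) and then approximates it in $\|\cdot\|_v$ by maps from $\I^{\Hv^\infty}$.

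The direction (ii)$\Rightarrow$(i), however, has a genuine gap: the $\ell_\infty$-sum construction cannot simultaneously place $g$ in $\overline{\I^{\Hv^\infty}}(U,G)$ \emph{and} make $g$ dominate $f$ exactly. For the truncations $g^{(m)}$ to converge to $g$ in $\|\cdot\|_v$ you need $c_k\|g_k\|_v\to 0$; but taking $n=1$, $\lambda_1=1$ in (ii) gives $\|g_k\|_v\geq\|f\|_v-\varepsilon_k$, so for $f\neq 0$ this forces $c_k\to 0$. On the domination side, the only information (ii) provides is the lower bound
\[
c_k\left\|\sum_i\lambda_i v(x_i)g_k(x_i)\right\|\;\geq\;c_k\bigl(A-\varepsilon_k B\bigr),
\qquad A:=\left\|\sum_i\lambda_i v(x_i)f(x_i)\right\|,\quad B:=\sum_i|\lambda_i|,
\]
and your argument needs $\sup_k c_k(A-\varepsilon_k B)\geq A$ for \emph{every} choice of $A,B>0$. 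For a fixed $k$ this reads $(c_k-1)A\geq c_k\varepsilon_k B$, which is impossible when $c_k\leq 1$; so you would need infinitely many $k$ with $c_k>1$ and $c_k\varepsilon_k/(c_k-1)\to 0$, contradicting $c_k\to 0$. The ``calibration'' $\varepsilon_k/c_k\to 0$ you propose does not help: it gives $c_k\|\sum_i\lambda_iv(x_i)g_k(x_i)\|\geq c_kA-\varepsilon_kB$, and with $c_k\to 0$ the right-hand side is eventually below $A$ for every $k$. In short, the weights that make the tail small also shrink every coordinate of the putative dominator strictly below $A$, so Theorem~\ref{t1} cannot be invoked.

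The paper's proof of (ii)$\Rightarrow$(i) sidesteps this entirely: it does not build a single dominating map in the closure. Instead, for each fixed $\varepsilon>0$ it passes to the linearizations $T_f,T_{g_\varepsilon}\in\L(\Gv^\infty(U),\cdot)$, uses Theorem~\ref{t0}(v) to get $\|T_f(\phi)\|\leq\|T_{g_\varepsilon}(\phi)\|+\varepsilon\|\phi\|$, and then factors $\iota_F\circ T_f$ through the injection $R_\varepsilon\colon\phi\mapsto(T_{g_\varepsilon}(\phi),\varepsilon\phi)\in G_\varepsilon\oplus_{\ell_1}\Gv^\infty(U)$ followed by a contraction $S_\varepsilon$ into $F$. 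The metric extension property of $\ell_\infty(B_{F^*})$ extends $\iota_F\circ S_\varepsilon$ to all of $G_\varepsilon\oplus_{\ell_1}\Gv^\infty(U)$, and composing with the first-coordinate inclusion produces $h_\varepsilon\in\I^{\Hv^\infty}(U,\ell_\infty(B_{F^*}))$ with $\|\iota_F\circ f-h_\varepsilon\|_v\leq\varepsilon$. Thus $\iota_F\circ f\in\overline{\I^{\Hv^\infty}}(U,\ell_\infty(B_{F^*}))$ directly, which is the definition of $f\in(\overline{\I^{\Hv^\infty}})^{inj}(U,F)$. The missing ingredient in your sketch is precisely this factorization-and-extension step, which converts each $\varepsilon$-domination into an actual $\varepsilon$-close approximant of $\iota_F\circ f$ inside the ideal.
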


\begin{proof} 
$(i)\Rightarrow(ii)$: Let $f\in(\overline{\I^{\Hv^\infty}})^{inj}(U,F)$ and $\varepsilon>0$. Hence $\iota_F\circ f\in\overline{\I^{\Hv^\infty}}(U,\ell_\infty(B_{F^*}))$ and so we can find a map $g_\varepsilon\in\I^{\Hv^\infty}(U,\ell_\infty(B_{F^*}))$ such that $\|\iota_F\circ f-g_\varepsilon\|_v<\varepsilon$. For any $n\in\N$, $\lambda_1,\ldots,\lambda_n\in\C$ and $ x_1,\ldots, x_n\in U$, we obtain 
\begin{align*}
\left\|\sum_{i=1}^n\lambda_iv(x_i)(\iota_F(f( x_i))-g_\varepsilon( x_i))\right\|
&\leq\sum_{i=1}^n\left|\lambda_i\right|v(x_i)\left\|(\iota_F\circ f-g_\varepsilon)(x_i)\right\|\\
&\leq\sum_{i=1}^n\left|\lambda_i\right|\|\iota_F\circ f-g_\varepsilon\|_v\leq\varepsilon\sum_{i=1}^n\left|\lambda_i\right|,
\end{align*}
and therefore 
\begin{align*}
\left\|\sum_{i=1}^n\lambda_i v(x_i)f( x_i)\right\|
&=\left\|\iota_F\left(\sum_{i=1}^n\lambda_i v(x_i)f( x_i)\right)\right\|\\
&\leq\left\|\sum_{i=1}^n\lambda_iv(x_i)g_\varepsilon( x_i)\right\|+\left\|\sum_{i=1}^n\lambda_iv(x_i)(\iota_F( f( x_i))-g_\varepsilon( x_i))\right\|\\
&\leq\left\|\sum_{i=1}^n\lambda_iv(x_i)g_\varepsilon( x_i)\right\|+\varepsilon\sum_{i=1}^n\|\lambda_i\|.
\end{align*}

$(ii)\Rightarrow(i)$: Let $\varepsilon>0$ and $\phi=\sum_{i=1}^n\lambda_iv(x_i)\delta_{ x_i}\in\lin(\At_{\G^\infty_v(U)})$. By $(ii)$, we have a complex normed space $G_\varepsilon$ and a map $g_\varepsilon\in\I^{\Hv^\infty}(U,G_\varepsilon)$ satisfying that
\begin{align*}
\left\|T_f(\phi)\right\|&
=\left\|\sum_{i=1}^n\lambda_iv(x_i)T_f(\delta_{ x_i})\right\|
=\left\|\sum_{i=1}^n\lambda_i v(x_i)f( x_i)\right\|\\
&\leq\left\|\sum_{i=1}^n\lambda_i v(x_i)g_\varepsilon( x_i)\right\|+\varepsilon\sum_{i=1}^n\left|\lambda_i\right|\\
&=\left\|\sum_{i=1}^n\lambda_iv(x_i)T_{g_\varepsilon}(\delta_{ x_i})\right\|+\varepsilon\sum_{i=1}^n\left|\lambda_i\right|\\
&=\left\|T_{g_\varepsilon}(\phi)\right\|+\varepsilon\sum_{i=1}^n\left|\lambda_i\right|,
\end{align*}
and taking the infimum over all the representations of $\phi$, Theorem \ref{t0} gives  
$$
\left\|T_f(\phi)\right\|\leq \left\|T_{g_\varepsilon}(\phi)\right\|+\varepsilon\left\|\phi\right\|.
$$
Consider the Banach space $F_\varepsilon=G_\varepsilon\oplus_{\ell_1}\Gv^\infty(U)$ and define the map $R_\varepsilon\colon\Gv^\infty(U)\to  F_\varepsilon$ by $R_\varepsilon(\phi)=(T_{g_\varepsilon}(\phi),\varepsilon\phi)$. Clearly, $R_\varepsilon$ is an injective continuous linear operator with $\left\|R_\varepsilon\right\|\leq\|g_\varepsilon\|_v+\varepsilon$. By the inequality above, the map $S_\varepsilon\colon R_\varepsilon(\Gv^\infty(U))\to F$ given by $S_\varepsilon(R_\varepsilon(\phi))=T_f(\phi)$ is well defined. Clearly, $S_\varepsilon$ is linear and since 
$$
\left\|S_\varepsilon(R_\varepsilon(\phi))\right\|=\left\|T_f(\phi)\right\|\leq \left\|T_{g_\varepsilon}(\phi)\right\|+\varepsilon\left\|\phi\right\|=\left\|(T_{g_\varepsilon}(\phi),\varepsilon\phi)\right\|=\left\|R_\varepsilon(\phi)\right\|
$$
for all $\phi\in\Gv^\infty(U)$, it is continuous with $\left\|S_\varepsilon\right\|\leq 1$. By the metric extension property of $\ell_\infty(B_{F^*})$, there exists an operator $T_\varepsilon\in\L(F_\varepsilon,\ell_\infty(B_{F^*}))$ such that $\iota_F\circ S_\varepsilon=T_\varepsilon|_{R_\varepsilon(\G^\infty_v(U))}$ and $\left\|T_\varepsilon\right\|=\left\|S_\varepsilon\right\|$. 
$$
\begin{tikzpicture}
  \node (D) {$U$};
	\node (GD) [right of=D] {$\G^\infty_v(U)$};
	\node (RG) [above right of=GD] {$R_\varepsilon(\G^\infty_v(U))$};
  \node (X)  [right of=GD] {$F$};
  \node (LI) [right of=X] {$\ell_\infty(B_{F^*})$};
	\node (Y) [below of=D] {$G_\varepsilon$};
  \node (Z) [right of=Y] {$ F_\varepsilon$};
	\draw[->] (D)  to node {$\Delta_v$} (GD);
  \draw[->] (GD) to node {$R_\varepsilon$} (RG);
	\draw[->] (GD) to node {$T_f$} (X);
	\draw[->] (D)  to node [swap] {$g_\varepsilon$} (Y);
  \draw[->] (RG) to node {$S_\varepsilon$} (X);
	\draw[->] (X)  to node {$\iota_F$} (LI);
	\draw[->] (Y)  to node [swap] {$p_1$} (Z);
  \draw[->] (Z)  to node [swap] {$T_\varepsilon$} (LI);
	\draw[->] (GD) to node {$R_\varepsilon$} (Z);
  \draw[->] (GD) to node [swap] {$p_2$} (Z);
\end{tikzpicture}
$$
Define now the maps $h_\varepsilon,k_\varepsilon\colon U\to\ell_\infty(B_{F^*})$ by $h_\varepsilon(x)=T_\varepsilon(g_\varepsilon(x),0)$ and $k_\varepsilon(x)=T_\varepsilon(0,\varepsilon\Delta_v(x))$ for all $x\in U$. On a hand, $h_\varepsilon=T_\varepsilon\circ p_1\circ g_\varepsilon\in\I^{\Hv^\infty}(U,\ell_\infty(B_{F^*}))$, where $p_1\colon G_\varepsilon\to  F_\varepsilon$ is the linear continuous map defined by $p_1(y)=(y,0)$, and, on the other hand, $k_\varepsilon=T_\varepsilon\circ p_2\circ \varepsilon\Delta_v\in\Hv^\infty(U,\ell_\infty(B_{F^*}))$, where $p_2\colon \G^\infty_v(U)\to  F_\varepsilon$ comes given by $p_2(\phi)=(0,\phi)$, with $\|k_\varepsilon\|_v\leq\varepsilon$ since 
$$
v(x)\left\|k_\varepsilon(x)\right\|
=v(x)\left\|(T_\varepsilon\circ p_2\circ \varepsilon\Delta_v)(x)\right\|
\leq v(x)\left\|T_\varepsilon\right\|\varepsilon\left\|\Delta_v(x)\right\|
\leq\varepsilon\left\|T_\varepsilon\right\|\leq\varepsilon
$$
for all $x\in U$. We have
\begin{align*}
(h_\varepsilon+k_\varepsilon)(x)&=T_\varepsilon(g_\varepsilon(x),0)+T_\varepsilon(0,\varepsilon \Delta_v(x))=T_\varepsilon(T_{g_\varepsilon}(\delta_x),\varepsilon\delta_x)\\
&=(T_\varepsilon\circ R_\varepsilon)(\delta_x)=(\iota_F\circ S_\varepsilon\circ R_\varepsilon)(\delta_x)\\
&=(\iota_F\circ T_f)(\delta_x)=(\iota_F\circ  f)(x)
\end{align*}
for all $x\in U$, and thus $h_\varepsilon+k_\varepsilon=\iota_F\circ f$. Hence $\|\iota_F\circ f-h_\varepsilon\|_v=\|k_\varepsilon\|_v\leq\varepsilon$, that is, $\iota_F\circ f\in\overline{\I^{\Hv^\infty}}(U,\ell_\infty(B_{F^*}))$ and thus $f\in(\overline{\I^{\Hv^\infty}})^{inj}(U,F)$.
\end{proof}

In the case that the weighted holomorphic ideal $\I^{\Hv^\infty}$ is equipped with a Banach ideal norm, Theorem \ref{t2} admits the following improvement.

\begin{corollary}\label{c8}
Let $[\I^{\Hv^\infty},\left\|\cdot\right\|_{\I^{\Hv^\infty}}]$ be a Banach weighted holomorphic ideal and let $f\in\Hv^\infty(U,F)$. The following are equivalent:
\begin{enumerate}
\item $f$ belongs to $(\overline{\I^{\Hv^\infty}})^{inj}(U,F)$.
\item There exists a complex Banach space $G$, a mapping $g\in\I^{\Hv^\infty}(U,G)$ and a function $N\colon\mathbb{R}^+\to\mathbb{R}^+$ such that
$$
\left\|\sum_{i=1}^n\lambda_i v(x_i)f( x_i)\right\|\leq N(\varepsilon)\left\|\sum_{i=1}^n\lambda_i v(x_i) g( x_i)\right\|+\varepsilon\sum_{i=1}^n\left|\lambda_i\right|
$$
for all $n\in\N$, $\lambda_1,\ldots,\lambda_n\in\C$, $ x_1,\ldots, x_n\in U$, and $\varepsilon>0$. 
\end{enumerate}
\end{corollary}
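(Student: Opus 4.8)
The plan is to deduce this refinement from Theorem \ref{t2} by a consolidation argument, the new ingredient being the completeness of the ideal norm. The direction $(ii)\Rightarrow(i)$ is the easy one: given $G$, $g\in\I^{\Hv^\infty}(U,G)$ and $N$ as in $(ii)$, for each fixed $\varepsilon>0$ put $G_\varepsilon=G$ and $g_\varepsilon=N(\varepsilon)g$. Then $g_\varepsilon\in\I^{\Hv^\infty}(U,G)$ — by property (P3) applied to $N(\varepsilon)\,\mathrm{id}_G$, or simply because $\|\cdot\|_{\I^{\Hv^\infty}}$ is absolutely homogeneous — and $\|\sum_i\lambda_i v(x_i)g_\varepsilon(x_i)\|=N(\varepsilon)\,\|\sum_i\lambda_i v(x_i)g(x_i)\|$ for all $n,\lambda_i,x_i$, so condition $(ii)$ of Theorem \ref{t2} holds for $f$. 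Hence $f\in(\overline{\I^{\Hv^\infty}})^{inj}(U,F)$ by Theorem \ref{t2}.

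For $(i)\Rightarrow(ii)$ I would first apply Theorem \ref{t2} with $\varepsilon=1/k$, $k\in\N$, obtaining a complex normed space $G_k$ and a mapping $g_k\in\I^{\Hv^\infty}(U,G_k)$ such that
$$
\Bigl\|\sum_{i=1}^n\lambda_i v(x_i)f(x_i)\Bigr\|\le\Bigl\|\sum_{i=1}^n\lambda_i v(x_i)g_k(x_i)\Bigr\|+\frac1k\sum_{i=1}^n|\lambda_i|
$$
for all $n\in\N$, $\lambda_1,\dots,\lambda_n\in\C$ and $x_1,\dots,x_n\in U$; replacing each $G_k$ by its completion and $g_k$ by its composition with the isometric inclusion (which by property (P3) keeps $g_k$ in the ideal and leaves the displayed inequality unchanged), I may assume each $G_k$ is a complex Banach space. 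Next, choosing scalars $c_k>0$ with $\sum_k c_k\|g_k\|_{\I^{\Hv^\infty}}<\infty$, I set $G=\bigl(\bigoplus_{k\in\N}G_k\bigr)_{\ell_1}$ with its canonical norm-one coordinate maps $\iota_k\colon G_k\to G$ and $P_k\colon G\to G_k$. Each $\iota_k\circ(c_k g_k)$ then lies in $\I^{\Hv^\infty}(U,G)$ with ideal norm at most $c_k\|g_k\|_{\I^{\Hv^\infty}}$ by property (P3); since $[\I^{\Hv^\infty},\|\cdot\|_{\I^{\Hv^\infty}}]$ is Banach, the absolutely convergent series $\sum_k\iota_k\circ(c_k g_k)$ sums to some $g\in\I^{\Hv^\infty}(U,G)$, and, convergence in the ideal norm forcing convergence in $\|\cdot\|_v$ and hence pointwise, one gets $P_k\circ g=c_k g_k$ for every $k$. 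Using $\|P_k\|\le 1$, the displayed inequality then turns into
$$
\Bigl\|\sum_{i=1}^n\lambda_i v(x_i)f(x_i)\Bigr\|\le\frac1{c_k}\Bigl\|\sum_{i=1}^n\lambda_i v(x_i)g(x_i)\Bigr\|+\frac1k\sum_{i=1}^n|\lambda_i|
$$
for every $k\in\N$, and I would conclude by defining $N\colon\mathbb{R}^+\to\mathbb{R}^+$ via $N(\varepsilon)=1/c_{k(\varepsilon)}$, where $k(\varepsilon)$ is any integer with $1/k(\varepsilon)\le\varepsilon$, so that the last display with $k=k(\varepsilon)$ is precisely condition $(ii)$.

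The one point requiring care is the consolidation step, which is also the only place where the Banach hypothesis on $\I^{\Hv^\infty}$ is genuinely used: one must verify that $\sum_k\iota_k\circ(c_k g_k)$ is absolutely summable in the ideal norm — so that completeness of $\I^{\Hv^\infty}(U,G)$ produces the limit $g$ — and that the norm-one coordinate projections of the $\ell_1$-sum push each inequality for $g_k$ down to an inequality for $g$ at the cost of the scalar $1/c_k$. The rest is routine bookkeeping.
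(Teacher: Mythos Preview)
Your proof is correct and follows essentially the same route as the paper: the paper also reduces $(ii)\Rightarrow(i)$ to Theorem~\ref{t2} (without spelling out the rescaling $g_\varepsilon=N(\varepsilon)g$), and for $(i)\Rightarrow(ii)$ it likewise applies Theorem~\ref{t2} along a null sequence (it uses $2^{-m}$ rather than $1/k$), forms the $\ell_1$-sum $G=(\oplus_m G_m)_{\ell_1}$, and sums the rescaled maps $(I_m\circ g_m)/(2^m\|g_m\|_{\I^{\Hv^\infty}})$ in the Banach ideal to obtain a single $g$, reading off $N$ from the rescaling factors. Your treatment is slightly more careful in one respect---you explicitly complete the $G_k$ before forming the $\ell_1$-sum, whereas the paper tacitly relies on the fact that the proof of Theorem~\ref{t2} actually produces $G_\varepsilon=\ell_\infty(B_{F^*})$---but the argument is otherwise the same consolidation via completeness.
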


\begin{proof}
In view of Theorem \ref{t2}, all we need to show is $(i)\Rightarrow(ii)$. Let $f\in(\overline{\I^{\Hv^\infty}})^{inj}(U,F)$. By Theorem \ref{t2}, for each $m\in\N$, there are a complex Banach space $G_m$ and a map $g_m\in\I^{\Hv^\infty}(U,G_m)$ such that
$$
\left\|\sum_{i=1}^n\lambda_i v(x_i)f( x_i)\right\|\leq\left\|\sum_{i=1}^n\lambda_i v(x_i) g_m( x_i)\right\|+\frac{1}{2^m}\sum_{i=1}^n\left|\lambda_i\right|
$$
for all $n\in\N$, $\lambda_1,\ldots,\lambda_n\in\C$ and $ x_1,\ldots, x_n\in U$. Take the Banach space $G=(\oplus_{m\in\N}G_m)_{\ell_1}$ and, for each $m\in\N$, the canonical inclusion $I_m\colon G_m\to G$. Then $I_m\circ g_m\in\I^{\Hv^\infty}(U,G)$, and because of 
$$
\sum_{k=1}^m\frac{\left\|I_k\circ g_k\right\|_{\I^{\Hv^\infty}}}{2^k\left\|g_k\right\|_{\I^{\Hv^\infty}}}
\leq\sum_{k=1}^m\frac{\left\|I_k\right\|\left\|g_k\right\|_{\I^{\Hv^\infty}}}{2^k\left\|g_k\right\|_{\I^{\Hv^\infty}}}\leq \sum_{k=1}^m\frac{1}{2^k}\leq 1,
$$
for all $m\in\N$, the series $\sum_{m\geq 1}(I_m\circ g_m)/2^m\left\|g_m\right\|_{\I^{\Hv^\infty}}$ converges in the Banach space $(\I^{\Hv^\infty}(U,G),\left\|\cdot\right\|_{\I^{\Hv^\infty}})$ to the weighted holomorphic map $g=\sum_{m=1}^\infty(I_m\circ g_m)/2^m\left\|g_m\right\|_{\I^{\Hv^\infty}}\in\I^{\Hv^\infty}(U,G)$. Using the inequality above, we deduce 
\begin{align*}
\left\|\sum_{i=1}^n\lambda_i v(x_i)f( x_i)\right\|
&\leq 2^m\left\|g_m\right\|_{\I^{\Hv^\infty}}\left\|\sum_{i=1}^n\frac{\lambda_iv(x_i)}{2^m\left\|g_m\right\|_{\I^{\Hv^\infty}}} g_m( x_i)\right\|+\frac{1}{2^m}\sum_{i=1}^n\left|\lambda_i\right|\\
&\leq 2^m\left\|g_m\right\|_{\I^{\Hv^\infty}}\sum_{m=1}^\infty\left\|\sum_{i=1}^n\frac{\lambda_iv(x_i)}{2^m\left\|g_m\right\|_{\I^{\Hv^\infty}}} g_m( x_i)\right\|+\frac{1}{2^m}\sum_{i=1}^n\left|\lambda_i\right|\\
&=2^m\left\|g_m\right\|_{\I^{\Hv^\infty}}\left\|\sum_{i=1}^n\lambda_iv(x_i) g( x_i)\right\|+\frac{1}{2^m}\sum_{i=1}^n\left|\lambda_i\right|
\end{align*}
for all $n\in\N$, $\lambda_1,\ldots,\lambda_n\in\C$, $ x_1,\ldots, x_n\in U$ and $m\in\N$. Finally, this inequality yields the inequality in the statement defining $N\colon\mathbb{R}^+\to\mathbb{R}^+$ by 
$$
N(\varepsilon)=\left\{
\begin{array}[]{lll}
	2\left\|g_1\right\|_{\I^{\Hv^\infty}} &\text{if}& \varepsilon>1,\\
	& & \\
	2^m\left\|g_m\right\|_{\I^{\Hv^\infty}} &\text{if}& 2^{-m}<\varepsilon\leq 2^{-m+1},\; m\in\N.
\end{array}
\right.
$$
\end{proof}


\textbf{Author contributions.} Contributions from all authors were equal and significant. The original manuscript was read and approved by all authors.\\

\textbf{Funding}. This research was partially supported by Junta de Andaluc\'ia grant FQM194, and by grant PID2021-122126NB-C31 funded by MCIN/AEI/ 10.13039/501100011033 and by ``ERDF A way of making Europe''. Funding for open access charge: Universidad de Almer\'ia (Spain) / CBUA.\\

\textbf{Conflict of interest}. The authors have no relevant financial or non-financial interests to disclose.\\

\textbf{Data availability}. No data were used to support this study.\\

\textbf{Competing interests}. The authors declare no competing interests.



\end{document}